\documentclass{amsart}
\usepackage{amsfonts}

\setcounter{MaxMatrixCols}{10}

\newtheorem{theorem}{Theorem}[section]
\newtheorem{lemma}[theorem]{Lemma}
\theoremstyle{definition}

\theoremstyle{remark}

\numberwithin{equation}{section}
\theoremstyle{plain}

\newtheorem{corollary}{Corollary}

\newtheorem{proposition}{Proposition}

\copyrightinfo{2001}{enter name of copyright holder}
\input{tcilatex}

\begin{document}
\title[]{Rigidity of Gradient Ricci Solitons}
\author{Peter Petersen}
\address{520 Portola Plaza\\
Dept of Math UCLA\\
Los Angeles, CA 90095}
\email{petersen@math.ucla.edu}
\urladdr{http://www.math.ucla.edu/\symbol{126}petersen}
\thanks{}
\author{William Wylie}
\email{wylie@math.ucla.edu}
\urladdr{http://www.math.ucla.edu/\symbol{126}wylie}
\date{}
\subjclass{53C25}
\keywords{}

\begin{abstract}
We define a gradient Ricci soliton to be rigid if it is a flat bundle $%
N\times _{\Gamma }\mathbb{R}^{k}$ where $N$ is Einstein. It is known that
not all gradient solitons are rigid. Here we offer several natural
conditions on the curvature that characterize rigid gradient solitons. Other
related results on rigidity of Ricci solitons are also explained in the last
section.
\end{abstract}

\maketitle

\section{Introduction}

A Ricci soliton is a Riemannian metric together with a vector field $\left(
M,g,X\right) $ that satisfies%
\begin{equation*}
\mathrm{Ric}+\frac{1}{2}L_{X}g=\lambda g.
\end{equation*}%
It is called shrinking when $\lambda >0,$ steady when $\lambda =0$, and
expanding when $\lambda <0$. In case $X=\nabla f$ the equation can also be
written as%
\begin{equation*}
\mathrm{Ric}+\mathrm{Hess}f=\lambda g
\end{equation*}%
and is called a gradient (Ricci) soliton. We refer the reader to \cite%
{Cao2006, Chow-Knopf, Chow-Lu-Ni, Derdzinski} for background on Ricci
solitons and their connection to the Ricci flow. It is also worth pointing
out that Perel'man has shown that on a compact manifold Ricci solitions are
always gradient solitons, see \cite{PerelmanI}.

Clearly Einstein metrics are solitons with $f$ being trivial. Another
interesting special case occurs when $f=\frac{\lambda }{2}\left\vert
x\right\vert ^{2}$ on $\mathbb{R}^{n}.$ In this case%
\begin{equation*}
\mathrm{Hess}f=\lambda g
\end{equation*}%
and therefore yields a gradient soliton where the background metric is flat.
This example is called a \emph{Gaussian}. Taking a product $N\times \mathbb{R%
}^{k}$ with $N$ being Einstein with Einstein constant $\lambda $ and $f=%
\frac{\lambda }{2}\left\vert x\right\vert ^{2}$ on $\mathbb{R}^{k}$ yields a
mixed gradient soliton. We can further take a quotient $N\times _{\Gamma }%
\mathbb{R}^{k}$, where $\Gamma $ acts freely on $N$ and by orthogonal
transformations on $\mathbb{R}^{k}$ (no translational components) to get a
flat vector bundle over a base that is Einstein and with $f=\frac{\lambda }{2%
}d^{2}$ where $d$ is the distance in the flat fibers to the base.

We say that a gradient soliton is \emph{rigid} if it is of the type $N\times
_{\Gamma }\mathbb{R}^{k}$ just described.

The goal of this paper is to determine when gradient solitons are rigid. For
compact manifolds it is easy to see that they are rigid precisely when the
scalar curvature is constant see \cite{Eminenti-LaNave-Mantegazza}. In fact
we can show something a bit more general

\begin{theorem}
A compact gradient soliton is rigid with trivial $f$ if%
\begin{equation*}
\mathrm{Ric}\left( \nabla f,\nabla f\right) \leq 0.
\end{equation*}
\end{theorem}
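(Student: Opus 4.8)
The plan is to show that the hypothesis forces $f$ to be constant, so that the soliton equation reduces to $\mathrm{Ric}=\lambda g$ and the manifold is Einstein, which is exactly the rigid case with trivial $f$. The engine is the Bochner formula applied to $f$, fed by two standard soliton identities, and the key mechanism is that the two Ricci terms appearing in Bochner combine with just the right coefficient.

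First I would record the identities that follow from $\mathrm{Ric}+\mathrm{Hess}f=\lambda g$. Tracing gives $\Delta f = n\lambda - S$, where $S$ is the scalar curvature. Taking the divergence of the soliton equation and using the contracted second Bianchi identity $\mathrm{div}\,\mathrm{Ric}=\tfrac{1}{2}\nabla S$ together with the commutation identity $\mathrm{div}(\mathrm{Hess}f)=\nabla(\Delta f)+\mathrm{Ric}(\nabla f)$ yields the key relation $\mathrm{Ric}(\nabla f)=\tfrac{1}{2}\nabla S$. In particular $\langle\nabla f,\nabla\Delta f\rangle = -\langle\nabla f,\nabla S\rangle = -2\,\mathrm{Ric}(\nabla f,\nabla f)$, since $\nabla\Delta f=-\nabla S$.

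Next I would substitute these into the Bochner formula
$$\tfrac{1}{2}\Delta|\nabla f|^2 = |\mathrm{Hess}f|^2 + \langle\nabla f,\nabla\Delta f\rangle + \mathrm{Ric}(\nabla f,\nabla f),$$
which collapses to $\tfrac{1}{2}\Delta|\nabla f|^2 = |\mathrm{Hess}f|^2 - \mathrm{Ric}(\nabla f,\nabla f)$. Integrating over the compact manifold kills the left-hand side, giving $\int_M |\mathrm{Hess}f|^2 = \int_M \mathrm{Ric}(\nabla f,\nabla f)$. The hypothesis $\mathrm{Ric}(\nabla f,\nabla f)\le 0$ makes the right-hand side nonpositive while the left-hand side is nonnegative, forcing $\mathrm{Hess}f\equiv 0$. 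Then $\Delta f=0$, so $f$ is harmonic on a compact manifold and hence constant; equivalently $\nabla f$ is parallel of constant length and must vanish at its critical points. With $f$ constant the equation becomes $\mathrm{Ric}=\lambda g$, i.e. the soliton is Einstein, which is rigid with trivial $f$.

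The computation is short, so there is no serious analytic obstacle; the only step requiring genuine care is the derivation of $\mathrm{Ric}(\nabla f)=\tfrac{1}{2}\nabla S$, since it rests on correctly commuting covariant derivatives in $\mathrm{div}(\mathrm{Hess}f)$ and on the sign and normalization in the contracted Bianchi identity. Once that identity is in hand, everything else is a one-line integration, and the essential point is simply that the coefficients of the two $\mathrm{Ric}(\nabla f,\nabla f)$ contributions in Bochner do not cancel but leave a single clean term of favorable sign.
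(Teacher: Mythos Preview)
Your proof is correct and is essentially the paper's argument: both arrive at the identity $\tfrac{1}{2}\Delta|\nabla f|^2 = |\mathrm{Hess}f|^2 - \mathrm{Ric}(\nabla f,\nabla f)$ and conclude by integrating over the compact manifold. The only cosmetic difference is that the paper derives this identity from its general divergence formula for $L_Xg$ (valid for arbitrary Ricci solitons), whereas you obtain it by feeding the soliton identities $\nabla\Delta f=-\nabla S$ and $\nabla S=2\,\mathrm{Ric}(\nabla f)$ into the classical Bochner formula.
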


Moreover, in dimensions 2 \cite{Hamilton1988} and 3 \cite{Ivey1993} all
compact solitons are rigid. There are compact shrinking (K\"{a}hler)
gradient solitons in dimension 4 that do not have constant scalar curvature,
the first example was constructed by Koiso \cite{Koiso1990} see also \cite%
{Cao1996, Wang-Zhu2004}. It is also not hard to see that, in any dimension,
compact steady or expanding solitons are rigid (see \cite{Ivey1993} and
Corollary \ref{Cmpct}). In fact, at least in the steady gradient soliton
case, this seems to go back to Lichnerowicz, see section 3.10 of \cite%
{Bourguignon1981}.

In the noncompact case Perel'man has shown that all 3-dimensional shrinking
gradient solitons with nonnegative sectional curvature are rigid \cite%
{PerelmanII}. However, in higher dimensions, it is less clear how to detect
rigidity. In fact there are expanding Ricci solitons with constant scalar
curvature that are not rigid in the above sense. These spaces are left
invariant metrics on nilpotent groups constructed by Lauret \cite{Lauret2001}
that are not gradient solitions. For other examples of noncompact gradient
solitons with large symmetry groups see \cite{Cao1996, Cao1997,
Feldman-Ilmanen-Knopf2003, Ivey1994}.

Note that if a soliton is rigid, then the \textquotedblleft
radial\textquotedblright\ curvatures vanish, i.e., 
\begin{equation*}
R\left( \cdot ,\nabla f\right) \nabla f=0,
\end{equation*}%
and the scalar curvature is constant. Conversely we just saw that constant
scalar curvature and radial Ricci flatness: $\mathrm{Ric}\left( \nabla
f,\nabla f\right) =0$ each imply rigidity on compact solitons. In the
noncompact case we can show

\begin{theorem}
\label{main} A shrinking (expanding) gradient soliton 
\begin{equation*}
\mathrm{Ric}+\mathrm{Hess}f=\lambda g
\end{equation*}%
is rigid if and only if it has constant scalar curvature and is radially
flat, i.e., $\mathrm{sec}\left( E,\nabla f\right) =0$.
\end{theorem}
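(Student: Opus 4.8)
The forward direction (rigid implies constant scalar curvature and radial flatness) is essentially observed in the excerpt just before the theorem statement. If the soliton is of the form $N\times_\Gamma \mathbb{R}^k$ with $f=\frac{\lambda}{2}d^2$, then the geometry splits as a product (locally), so the curvatures involving the radial direction $\nabla f$ vanish and the scalar curvature equals the (constant) scalar curvature of the Einstein base $N$. So the work is really in the converse.

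For the converse, I would start from the soliton equation $\mathrm{Ric}+\mathrm{Hess}\,f=\lambda g$ and exploit the well-known soliton identities. Let me recall/derive them. Taking the trace gives $S + \Delta f = n\lambda$ where $S$ is the scalar curvature. There is also the classical identity $S + |\nabla f|^2 - 2\lambda f = \text{const}$. The key structural fact I expect to need is the formula for how $\nabla f$ interacts with curvature: differentiating the soliton equation and using the second Bianchi identity yields $\nabla S = 2\,\mathrm{Ric}(\nabla f,\cdot)$, i.e. $\frac{1}{2}\nabla S = \mathrm{Ric}(\nabla f)$.

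I would plan to proceed as follows. First, constant scalar curvature gives $\nabla S=0$, so the identity above forces $\mathrm{Ric}(\nabla f)=0$; in particular $\nabla f$ is in the kernel of Ricci, and combined with the soliton equation this gives $\mathrm{Hess}\,f(\nabla f,\cdot)=\lambda\,\nabla f$, so $\nabla_{\nabla f}\nabla f=\lambda\,\nabla f$. This says integral curves of $\nabla f$ are reparametrized geodesics and lets me understand $|\nabla f|^2$ along the flow: I expect $|\nabla f|^2$ to be controlled by $f$, yielding that the level sets of $f$ are the fibers of the desired bundle structure. Second, I would use the radial flatness hypothesis $\mathrm{sec}(E,\nabla f)=0$, i.e. $R(E,\nabla f)\nabla f=0$ for all $E$, to show that the distribution orthogonal to $\nabla f$ is preserved and that $\mathrm{Hess}\,f$ acts as $\lambda$ times the identity in the radial direction while vanishing appropriately on the base. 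The goal is to produce a local splitting where $f=\frac{\lambda}{2}d^2$ in flat fiber directions $\mathbb{R}^k$ and the orthogonal complement is Einstein.

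The main obstacle I anticipate is upgrading the infinitesimal/pointwise information (a splitting of the tangent bundle into a radial flat part and an Einstein part, with $\mathrm{Hess}\,f$ diagonalized accordingly) into a genuine global isometric product structure $N\times_\Gamma\mathbb{R}^k$. This is a de Rham–type splitting argument: I would need to verify that the orthogonal distributions are parallel (using the vanishing radial curvatures together with $\nabla_{\nabla f}\nabla f=\lambda\nabla f$ to control the second fundamental forms of the level sets), that the integral manifolds of the $\nabla f$-direction are totally geodesic and flat, and then pass to the universal cover to obtain $N\times\mathbb{R}^k$ with $\Gamma=\pi_1$ acting by isometries with no translational component on the flat factor. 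Handling the set where $\nabla f=0$ (the zero locus, which should be the base $N$) and ensuring the rigidity of the bundle structure there is the delicate part; I expect one must show $f$ has the clean quadratic form $\frac{\lambda}{2}d^2$ by integrating $\nabla_{\nabla f}\nabla f=\lambda\nabla f$ and identifying $d$ with the distance to the zero set of $\nabla f$.
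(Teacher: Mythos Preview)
Your outline correctly assembles the standard identities (in particular $\nabla\mathrm{scal}=2\,\mathrm{Ric}(\nabla f)$, hence $\mathrm{Ric}(\nabla f)=0$ and $\nabla_{\nabla f}\nabla f=\lambda\nabla f$, and the normalization $f=\tfrac{\lambda}{2}r^{2}$ with $|\nabla r|=1$), and you have correctly located where the difficulty lies. But there is a genuine gap at exactly that point: you never produce a mechanism that forces $\mathrm{Hess}\,f$ to have only the eigenvalues $0$ and $\lambda$, and without this your proposed de~Rham splitting has no distributions to work with. The one-dimensional span of $\nabla f$ is not the flat factor (which is $k$-dimensional), and its orthogonal complement is not the Einstein factor; the correct distributions are the eigenspaces of $S=\nabla\nabla f$, and nothing in your argument controls them.

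The paper's proof supplies precisely this missing step via the identity
\[
\nabla_{\nabla f}S \;+\; S\circ(S-\lambda I)\;=\;-\,R(\cdot,\nabla f)\nabla f\;-\;\tfrac{1}{2}\nabla_{\cdot}\nabla\mathrm{scal},
\]
which under the hypotheses reduces to the Riccati-type equation $\nabla_{\nabla f}S + S\circ(S-\lambda I)=0$. Along the $\nabla r$-flow each eigenvalue $\mu$ of $S$ then satisfies $\partial_{r}\mu=\tfrac{1}{\lambda r}\,\mu(\lambda-\mu)$. A blow-up argument forces $\mu_{\min}\geq 0$, so $f$ is convex; hence the minimum set $N=\{f=0\}=\{\nabla f=0\}$ is totally convex, and on $N$ the equation gives $S\circ(S-\lambda I)=0$, so the eigenvalues are exactly $0$ and $\lambda$ with constant multiplicities. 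This is what makes $N$ a totally geodesic submanifold of a definite codimension $k$, after which the normal exponential map (following $\nabla r$) together with radial flatness determines the metric as a flat bundle $N\times_{\Gamma}\mathbb{R}^{k}$. Your proposal should incorporate this eigenvalue ODE; the ``parallel distributions'' heuristic alone does not close the argument.
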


While radial flatness seems like a strong assumption, there are a number of
weaker conditions that guarantee radial flatness.

\begin{proposition}
The following conditions for a shrinking (expanding) gradient soliton 
\begin{equation*}
\mathrm{Ric}+\mathrm{Hess}f=\lambda g
\end{equation*}%
all imply that the metric is radially flat and has constant scalar curvature

\begin{enumerate}
\item The scalar curvature is constant and $\mathrm{sec}\left( E,\nabla
f\right) \geq 0$ $\left( \mathrm{sec}\left( E,\nabla f\right) \leq 0.\right) 
$

\item The scalar curvature is constant and $0\leq \mathrm{Ric}\leq \lambda g$
$\left( \lambda g\leq \mathrm{Ric}\leq 0.\right) $

\item The curvature tensor is harmonic.

\item $\mathrm{Ric\ }\geq 0$ $\left( \mathrm{Ric}\leq 0\right) $ and $%
\mathrm{sec}\left( E,\nabla f\right) =0.$
\end{enumerate}
\end{proposition}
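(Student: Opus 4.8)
The plan is to establish the basic soliton identities first, then treat each of the four conditions by reducing it to the hypotheses of Theorem~\ref{main}, namely constant scalar curvature together with radial flatness $\mathrm{sec}(E,\nabla f)=0$. The starting point is the standard family of formulas that follow from the soliton equation $\mathrm{Ric}+\mathrm{Hess}f=\lambda g$ by taking traces and applying the second Bianchi identity. Tracing gives $\mathrm{scal}+\Delta f=n\lambda$, and the contracted second Bianchi identity together with the soliton equation yields the well-known relations
\begin{equation*}
\tfrac{1}{2}\nabla \mathrm{scal}=\mathrm{Ric}(\nabla f,\cdot),\qquad \mathrm{scal}+\left\vert \nabla f\right\vert ^{2}-2\lambda f=\mathrm{const}.
\end{equation*}
I would also record the evolution of $\mathrm{Ric}$ along $\nabla f$, i.e.\ an expression for $\nabla_{\nabla f}\mathrm{Ric}$ in terms of the curvature tensor contracted with $\nabla f$; differentiating the soliton equation one obtains a Bochner-type formula relating $\Delta_{f}\mathrm{Ric}$ (the drift Laplacian) to the action of the curvature operator and the radial curvatures $R(\cdot,\nabla f)\nabla f$. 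These identities are the common engine for all four parts.

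For part (1), constant scalar curvature already makes $\mathrm{Ric}(\nabla f,\cdot)=\tfrac12\nabla\mathrm{scal}=0$, so $\mathrm{Ric}(\nabla f,\nabla f)=0$ and $\nabla f$ is in the kernel of $\mathrm{Ric}$. The point is that the sum of the sectional curvatures $\mathrm{sec}(E_i,\nabla f)$ over an orthonormal basis equals $\mathrm{Ric}(\nabla f,\nabla f)/\left\vert\nabla f\right\vert^{2}=0$; a one-signed sectional curvature condition then forces each $\mathrm{sec}(E,\nabla f)=0$ pointwise, which is exactly radial flatness. Part (2) I would reduce to part (1): under constant scalar curvature the eigenvalue pinching $0\le\mathrm{Ric}\le\lambda g$ controls the sign of the radial sectional curvatures through the integrated/traced Bochner formula, so that the curvature term driving $\mathrm{sec}(E,\nabla f)$ has a definite sign and, combined with the vanishing of its trace, must vanish identically. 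Part (4) is the mirror image: here radial flatness is assumed outright, so the task is only to deduce that the scalar curvature is constant, which follows because $\mathrm{sec}(E,\nabla f)=0$ kills the curvature term in the evolution equation for $\mathrm{Ric}(\nabla f,\nabla f)$, and the one-signed Ricci hypothesis upgrades this to $\mathrm{Ric}(\nabla f,\cdot)=0$, hence $\nabla\mathrm{scal}=0$.

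Part (3) is the one I expect to be the main obstacle, since harmonicity of the curvature tensor ($\delta R=0$, equivalently $\mathrm{Ric}$ is a Codazzi tensor with $\nabla_iR_{jk}=\nabla_jR_{ik}$) is a condition on the full curvature rather than on radial directions. The plan is to combine the Codazzi identity $\nabla_X\mathrm{Ric}(Y,Z)=\nabla_Y\mathrm{Ric}(X,Z)$ with the soliton identity $\nabla\mathrm{scal}=2\,\mathrm{Ric}(\nabla f,\cdot)$ and the formula for $\nabla_{\nabla f}\mathrm{Ric}$ to produce an equation that algebraically forces the radial curvatures to vanish; harmonicity makes the derivatives of $\mathrm{Ric}$ symmetric in a way that, when fed the soliton structure, collapses $R(\cdot,\nabla f)\nabla f$ to zero and simultaneously shows $\mathrm{scal}$ is constant. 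The delicate step is managing the index contractions so that the harmonic-curvature symmetry and the second-Bianchi-derived soliton relations cancel correctly; I would verify this by writing everything in a local orthonormal frame and tracking the antisymmetrizations carefully, after which all four cases land squarely in the hypotheses of Theorem~\ref{main} and rigidity follows.
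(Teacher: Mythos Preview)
Your treatment of parts (1) and (4) is essentially correct and matches the paper. For (4) the paper's route is slightly more direct: radial flatness gives $\mathrm{Ric}(\nabla f,\nabla f)=\sum\mathrm{sec}(E_i,\nabla f)\left\vert\nabla f\right\vert^{2}=0$ immediately, and then the sign condition on $\mathrm{Ric}$ upgrades this to $\mathrm{Ric}(\nabla f)=0$ by the elementary fact that a semidefinite self-adjoint operator kills any vector on which its quadratic form vanishes; no evolution equation is needed.

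Part (3) is actually the \emph{easiest} case, not the hardest. Differentiating the soliton equation one obtains $(\nabla_X\mathrm{Ric})(Y,Z)-(\nabla_Y\mathrm{Ric})(X,Z)=-g(R(X,Y)\nabla f,Z)$; harmonicity of the curvature makes the left side vanish (this is exactly the Codazzi property you wrote down), so $R(X,Y)\nabla f=0$ for all $X,Y$, giving radial flatness in one line. Constancy of the scalar curvature is a standard consequence of harmonic curvature alone. Your proposed detour through $\nabla_{\nabla f}\mathrm{Ric}$ and careful index-tracking is unnecessary.

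The genuine gap is in part (2). Your plan to ``reduce to part (1)'' by arguing that the pinching $0\le\mathrm{Ric}\le\lambda g$ forces a sign on $\mathrm{sec}(E,\nabla f)$ does not work: a Ricci pinching gives no a priori sign on individual sectional curvatures, and the traced formula only controls $\mathrm{tr}\bigl(\mathrm{Ric}\circ(\lambda I-\mathrm{Ric})\bigr)$, not each $\mathrm{sec}(E,\nabla f)$. The paper's argument is different and relies on the untraced identity you alluded to but did not use,
\[
\nabla_{\nabla f}\mathrm{Ric}+\mathrm{Ric}\circ(\lambda I-\mathrm{Ric})=R(\cdot,\nabla f)\nabla f+\tfrac{1}{2}\nabla_{\cdot}\nabla\mathrm{scal}.
\]
With constant scalar curvature the last term drops; tracing then gives $\mathrm{tr}\bigl(\mathrm{Ric}\circ(\lambda I-\mathrm{Ric})\bigr)=0$, and the pinching forces $\mathrm{Ric}\circ(\lambda I-\mathrm{Ric})=0$ pointwise, so the eigenvalues of $\mathrm{Ric}$ (equivalently of $S=\nabla\nabla f$) are exactly $0$ and $\lambda$. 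The identity now reads $R(\cdot,\nabla f)\nabla f=\nabla_{\nabla f}\mathrm{Ric}=-\nabla^{2}_{\nabla f,\cdot}\nabla f$, and one verifies by a short direct computation that $g(\nabla^{2}_{\nabla f,E}\nabla f,E)=0$ whenever $E$ is an eigenfield of $S$ with eigenvalue $0$ or $\lambda$. This eigenvalue-rigidity step is the missing idea in your proposal.
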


Given the above theorem it is easy to see that rigid solitions also satisfy
these conditions.

Condition 2 is very similar to a statement by Naber, but our proof is quite
different. The following result shows that, for shrinking solitons, the
scalar curvature condition is in fact redundant. Thus we are offering an
alternate proof for part of Naber's result (see \cite{Naber}).

\begin{lemma}[Naber]
\label{NaberLemma} If $M$ is a shrinking gradient Ricci Soliton with $0\leq 
\mathrm{Ric}\leq \lambda g,$ then the scalar curvature is constant.
\end{lemma}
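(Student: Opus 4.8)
The plan is to show that the scalar curvature $S$ is subharmonic for the drift Laplacian $\Delta _{f}=\Delta -\left\langle \nabla f,\nabla \cdot \right\rangle $ and then use a weighted integration argument to force the underlying pointwise inequality to become an equality. First I would record the standard soliton identities. Tracing $\mathrm{Ric}+\mathrm{Hess}f=\lambda g$ gives $\Delta f=n\lambda -S$, and combining the contracted second Bianchi identity with the soliton equation yields $\nabla S=2\,\mathrm{Ric}(\nabla f)$. Taking the divergence of this last identity, using Bianchi once more and substituting $\mathrm{Hess}f=\lambda g-\mathrm{Ric}$, produces
\begin{equation*}
\Delta _{f}S=2\lambda S-2\left\vert \mathrm{Ric}\right\vert ^{2}.
\end{equation*}

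Next I would exploit the hypothesis $0\leq \mathrm{Ric}\leq \lambda g$ pointwise. Diagonalizing $\mathrm{Ric}$ with eigenvalues $\mu _{1},\dots ,\mu _{n}\in \lbrack 0,\lambda ]$, each satisfies $\mu _{i}^{2}\leq \lambda \mu _{i}$, so $\left\vert \mathrm{Ric}\right\vert ^{2}=\sum \mu _{i}^{2}\leq \lambda \sum \mu _{i}=\lambda S$. Hence $\Delta _{f}S\geq 0$, i.e. $S$ is $f$-subharmonic, and moreover $0\leq S\leq n\lambda $ is bounded. The core step is then to integrate $\Delta _{f}S$ against the weighted measure $e^{-f}\,dV$. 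Since $\mathrm{div}(e^{-f}\nabla S)=e^{-f}\Delta _{f}S$, a cutoff $\phi _{r}$ (equal to $1$ on $B_{r}$, supported in $B_{2r}$, with $|\nabla \phi _{r}|\leq C/r$) gives
\begin{equation*}
\int_{M}\phi _{r}^{2}\,\Delta _{f}S\;e^{-f}\,dV=-2\int_{M}\phi _{r}\left\langle \nabla \phi _{r},\nabla S\right\rangle e^{-f}\,dV.
\end{equation*}
I would bound the right-hand side using $|\nabla S|=2|\mathrm{Ric}(\nabla f)|\leq C|\nabla f|$ together with the quadratic growth of $f$ (so that $|\nabla f|$ grows at most linearly) and the at-most-Euclidean volume growth coming from $\mathrm{Ric}\geq 0$; these make the Gaussian factor $e^{-f}$ dominate, so the right-hand side tends to $0$ as $r\to \infty $. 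As $\Delta _{f}S\geq 0$, the left-hand side increases to $\int_{M}\Delta _{f}S\,e^{-f}\,dV$, which must therefore vanish, giving $\Delta _{f}S\equiv 0$ and hence $\left\vert \mathrm{Ric}\right\vert ^{2}=\lambda S$ everywhere.

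To finish, equality in $\sum \mu _{i}(\lambda -\mu _{i})=0$ with every summand nonnegative forces each eigenvalue $\mu _{i}$ to lie in $\{0,\lambda \}$ at each point, so $S$ takes values in the discrete set $\{0,\lambda ,2\lambda ,\dots ,n\lambda \}$. Since $S$ is continuous and $M$ is connected, $S$ is constant, as claimed. I expect the main obstacle to be the justification of the weighted integration by parts in the noncompact case, that is, controlling the boundary flux through $\partial B_{r}$; this is exactly where the growth estimates on $f$, on $|\nabla f|$, and on the volume (all available once $\mathrm{Ric}\geq 0$) are essential, and it is the only place where completeness and the soliton structure enter beyond the elementary algebra.
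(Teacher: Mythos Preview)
Your argument is correct and shares its core with the paper's: both use the identity $\tfrac{1}{2}\Delta_f\mathrm{scal}=\sum_i\lambda_i(\lambda-\lambda_i)$ together with $0\le\lambda_i\le\lambda$ to get $\Delta_f\mathrm{scal}\ge 0$, and then invoke a Liouville-type principle for bounded $\Delta_f$-subharmonic functions on a shrinker.

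The difference lies in how that last step is carried out. The paper does not integrate by parts directly; it quotes a packaged Liouville theorem (its Corollary~4.7, from Yau's $L^\alpha$ argument combined with the Wei--Wylie/Morgan estimate that $e^{-f}\,dV$ is dominated by a Gaussian measure whenever $\mathrm{Ric}+\mathrm{Hess}f\ge\lambda g>0$). That result says any bounded $\Delta_f$-subharmonic function is constant, so $\mathrm{scal}$ is constant in one stroke, with no need for your final discrete-values step. Your hands-on cutoff argument is a legitimate substitute, but note that the part you flag as the obstacle really is one: to make ``the Gaussian factor $e^{-f}$ dominate'' you need a quadratic \emph{lower} bound $f\ge c\,r^2-C$, and this does not fall out of $0\le\mathrm{Hess}f\le\lambda g$ and Bishop--Gromov alone. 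The paper sidesteps this by citing the weighted-volume bound; if you want to stay self-contained, it is cleaner to multiply by $\phi_r^2\,\mathrm{scal}$ rather than $\phi_r^2$ and use Cauchy--Schwarz so that the boundary term is controlled by $r^{-2}\!\int_{B_{2r}}\mathrm{scal}^2\,e^{-f}\,dV$, which tends to zero once you know only that $\int_M e^{-f}\,dV<\infty$. Your concluding observation that $\Delta_f\mathrm{scal}\equiv 0$ forces $\lambda_i\in\{0,\lambda\}$ and hence $\mathrm{scal}$ into a finite set is a pleasant alternative to invoking the strong maximum principle, and it recovers slightly more than the paper states at this point.
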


There is an interesting relationship between this result and Perel'man's
classification in dimension 3. The main part of the classification is to
show that there are no noncompact shrinking gradient solitons with positive
sectional curvature. Perel'man's proof has two parts, first he shows that
such a metric has $\mathrm{scal} \leq 2 \lambda$ and then he uses this fact,
and the Gauss-Bonnet theorem, to arrive at a contradiction. It is a simple
algebraic fact that if $\mathrm{sec} \geq 0$ and $\mathrm{scal} \leq 2
\lambda$ then $\mathrm{Ric} \leq \lambda$. Therefore, Naber's lemma implies
the following gap theorem which generalizes the second part of Perel'man's
argument to higher dimensions.

\begin{theorem}
If $M^n$ is a shrinking gradient Ricci soliton with nonnegative sectional
curvature and $\mathrm{scal} \leq 2 \lambda$ then the universal cover of $M$
is isometric to either $\mathbb{R}^n$ or $S^2 \times \mathbb{R}^{n-2}$.
\end{theorem}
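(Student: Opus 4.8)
The plan is to funnel the hypotheses through the rigidity results already established, reducing the statement to a dimension count on an Einstein factor. First I would record the elementary linear-algebra observation quoted just before the statement: if $\mathrm{sec}\geq 0$, then for any unit vector $e_1$, completing to an orthonormal frame $e_1,\dots,e_n$ gives $\mathrm{Ric}(e_1,e_1)=\sum_{j\neq 1}\mathrm{sec}(e_1,e_j)\leq \sum_{i<j}\mathrm{sec}(e_i,e_j)=\frac{1}{2}\mathrm{scal}\leq \lambda$, since every sectional curvature is nonnegative and $\mathrm{scal}\leq 2\lambda$. Combined with $\mathrm{Ric}\geq 0$ (again from $\mathrm{sec}\geq 0$), this yields the two-sided pinching $0\leq \mathrm{Ric}\leq \lambda g$.

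With this pinching in hand, Lemma \ref{NaberLemma} (Naber) immediately gives that the scalar curvature is constant. Since $\mathrm{sec}\geq 0$ forces $\mathrm{sec}(E,\nabla f)\geq 0$, condition (1) of the Proposition then shows the soliton is radially flat. Constant scalar curvature together with radial flatness is precisely the hypothesis of Theorem \ref{main}, so the soliton is rigid: the universal cover splits isometrically as $\tilde{N}\times \mathbb{R}^k$, where $N$ is Einstein with Einstein constant $\lambda>0$ and $\tilde{N}$ is its universal cover.

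It remains to pin down $N$. Because the $\mathbb{R}^k$ factor is flat, $\mathrm{scal}=\mathrm{scal}_N=(\dim N)\lambda$, and the hypothesis $\mathrm{scal}\leq 2\lambda$ with $\lambda>0$ forces $\dim N\leq 2$. The case $\dim N=1$ is impossible, since a one-dimensional manifold is flat and cannot be Einstein with positive constant. If $\dim N=0$, the universal cover is $\mathbb{R}^n$ (a Gaussian). If $\dim N=2$, then $N$ is a surface with $\mathrm{Ric}_N=\lambda g_N$, hence of constant Gauss curvature $\lambda>0$, so its universal cover is the round sphere $S^2$ and the universal cover of $M$ is $S^2\times \mathbb{R}^{n-2}$.

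I do not expect a genuine obstacle here: all of the analytic weight is carried by Lemma \ref{NaberLemma} and Theorem \ref{main}, and what remains is the short dimension count above. The only points requiring care are verifying the upper Ricci bound from the quoted algebraic fact and correctly excluding the one-dimensional Einstein factor, both of which are elementary.
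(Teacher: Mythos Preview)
Your proposal is correct and follows exactly the route the paper indicates: the paper does not write out a separate proof for this theorem but simply remarks that it follows from the algebraic fact $\mathrm{sec}\geq 0,\ \mathrm{scal}\leq 2\lambda\Rightarrow \mathrm{Ric}\leq \lambda$ together with Naber's Lemma, after which the rigidity machinery (constant scalar curvature, radial flatness, Theorem~\ref{main}) and the obvious dimension count on the Einstein factor finish the job. You have supplied precisely those details, and the exclusion of the $\dim N=1$ case and the identification of the $\dim N=0,2$ cases are handled correctly.
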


The key to most of our proofs rely on a new equation that in a fairly
obvious way relates rigidity, radial curvatures, and scalar curvature%
\begin{equation*}
\nabla _{\nabla f}\mathrm{Ric}+\mathrm{Ric}\circ \left( \lambda I-\mathrm{Ric%
}\right) =R\left( \cdot ,\nabla f\right) \nabla f+\frac{1}{2}\nabla _{\cdot
}\nabla \mathrm{scal.}
\end{equation*}

While we excluded steady solitons from the above result, it wasn't really
necessary to do so. In fact it is quite easy to prove something that sounds
more general.

\begin{theorem}
A steady soliton 
\begin{equation*}
\mathrm{Ric}+\mathrm{Hess}f=0
\end{equation*}%
whose scalar curvature achieves its minimum is Ricci flat. In particular,
steady gradient solitions with constant scalar curvature are Ricci flat.
\end{theorem}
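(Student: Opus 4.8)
The plan is to reduce everything to a single elliptic equation for the scalar curvature and then invoke the strong maximum principle. Setting $\lambda =0$ in the key equation and tracing it is the natural first move. Recall that $\mathrm{tr}\left( \nabla _{\nabla f}\mathrm{Ric}\right) =\left\langle \nabla f,\nabla \mathrm{scal}\right\rangle $, that $\mathrm{tr}\left( R\left( \cdot ,\nabla f\right) \nabla f\right) =\mathrm{Ric}\left( \nabla f,\nabla f\right) $, that $\mathrm{tr}\left( \mathrm{Ric}\circ \mathrm{Ric}\right) =\left\vert \mathrm{Ric}\right\vert ^{2}$, and that $\mathrm{tr}\left( \nabla _{\cdot }\nabla \mathrm{scal}\right) =\Delta \mathrm{scal}$. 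Combined with the standard soliton identity $\mathrm{Ric}\left( \nabla f\right) =\frac{1}{2}\nabla \mathrm{scal}$ (contracted second Bianchi together with the soliton equation), which yields $\mathrm{Ric}\left( \nabla f,\nabla f\right) =\frac{1}{2}\left\langle \nabla f,\nabla \mathrm{scal}\right\rangle $, the trace collapses to
\begin{equation*}
\tfrac{1}{2}\Delta \mathrm{scal}-\tfrac{1}{2}\left\langle \nabla f,\nabla \mathrm{scal}\right\rangle =-\left\vert \mathrm{Ric}\right\vert ^{2}.
\end{equation*}
Writing $\Delta _{f}=\Delta -\left\langle \nabla f,\nabla \cdot \right\rangle $ for the drift Laplacian, this says $\Delta _{f}\mathrm{scal}=-2\left\vert \mathrm{Ric}\right\vert ^{2}\leq 0$, so the scalar curvature is $f$-superharmonic.

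The second move is to exploit that $\Delta _{f}$ is a second-order elliptic operator with smooth coefficients and \emph{no zeroth-order term}, so Hopf's strong minimum principle applies to it verbatim. If $\mathrm{scal}$ attains its minimum at some interior point $p$, then $\mathrm{scal}$ must be constant on the connected manifold $M$. Feeding this back into $\Delta _{f}\mathrm{scal}=-2\left\vert \mathrm{Ric}\right\vert ^{2}$ forces $\left\vert \mathrm{Ric}\right\vert ^{2}\equiv 0$, i.e.\ the soliton is Ricci flat. The ``in particular'' statement is then immediate: if $\mathrm{scal}$ is constant it certainly achieves its minimum, and one can even bypass the maximum principle entirely, since then $\Delta _{f}\mathrm{scal}=0$ gives $\left\vert \mathrm{Ric}\right\vert ^{2}\equiv 0$ directly.

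I expect the only genuine subtlety to be the passage from ``Ricci flat at the minimum point'' to ``Ricci flat everywhere.'' A naive first- and second-derivative test at $p$ only produces $\nabla \mathrm{scal}\left( p\right) =0$ and $\Delta \mathrm{scal}\left( p\right) \geq 0$, hence $\mathrm{Ric}\left( p\right) =0$ at that single point, which is far weaker than the claim. The strong minimum principle is exactly what upgrades this pointwise vanishing to a global statement, and its applicability hinges on $M$ being connected and on $\Delta _{f}$ carrying no zeroth-order term---both of which hold. Note that no completeness hypothesis is needed, since the strong maximum principle is a purely local-to-global assertion propagated through connectedness (the set where $\mathrm{scal}$ equals its minimum is open by the principle and closed by continuity).
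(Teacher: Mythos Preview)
Your proof is correct and follows essentially the same route as the paper: both derive the identity $\tfrac{1}{2}\Delta_f\mathrm{scal}=-|\mathrm{Ric}|^2$ (the paper records this as a special case of Lemma~\ref{Sol2}) and then apply the strong maximum principle for $\Delta_f$ to conclude that a minimum of $\mathrm{scal}$ forces it to be constant, hence $\mathrm{Ric}\equiv 0$. Your explicit remark that $\Delta_f$ has no zeroth-order term, so Hopf applies directly, is a nice touch but not a departure from the paper's argument.
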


In the context of condition 3 about harmonicity of the curvature there is a
rather interesting connection with gradient solitons. Consider the exterior
covariant derivative%
\begin{equation*}
d^{\nabla }:\Omega ^{p}\left( M,TM\right) \rightarrow \Omega ^{p+1}\left(
M,TM\right)
\end{equation*}%
for forms with values in the tangent bundle. The curvature can then be
interpreted as the 2-form%
\begin{equation*}
R\left( X,Y\right) Z=\left( \left( d^{\nabla }\circ d^{\nabla }\right)
\left( Z\right) \right) \left( X,Y\right)
\end{equation*}%
and Bianchi's second identity as $d^{\nabla }R=0$. The curvature is harmomic
if $d^{\ast }R=0,$ where $d^{\ast }$ is the adjoint of $d^{\nabla }.$ If we
think of $\mathrm{Ric}$ as a 1-form with values in $TM,$ then Bianchi's
second identity implies%
\begin{equation*}
d^{\nabla }\mathrm{Ric}=-d^{\ast }R.
\end{equation*}%
Thus the curvature tensor is harmonic if and only if the Ricci tensor is
closed. This condition has been studied extensively as a generalization of
being an Einstein metric (see \cite{Besse}, Chapter 16). It is also easy to
see that it implies constant scalar curvature.

Next note that the condition for being a steady gradient soliton is the same
as saying that the Ricci tensor is exact%
\begin{equation*}
\mathrm{Ric}=d^{\nabla }\left( -X\right) =-\nabla X.
\end{equation*}%
Since the Ricci tensor is symmetric, this requires that $X$ is locally a
gradient field. The general gradient soliton equation%
\begin{equation*}
\mathrm{Ric}=d^{\nabla }\left( -X\right) +\lambda I
\end{equation*}%
then appears to be a simultaneous generalization of being Einstein and
exact. Thus Theorem \ref{main} implies that rigid gradient solitons are
precisely those metrics that satisfy all the generalized Einstein conditions.

Throughout the paper we also establish several other simple results that
guarantee rigidity under slightly different assumptions on the curvature and
geometry of the space. We can also use the techniques developed here to
obtain some results for solitons with large amounts symmetry, this will be
the topic of a forthcoming paper.

\section{Formulas}

In this section we establish the general formulas that will used to prove
the various rigidity results we are after. There are two sets of results.
The most general and weakest for Ricci solitons and the more interesting and
powerful for gradient solitons.

First we establish a general formula that leads to the Bochner formulas for
Killing and gradient fields (see also \cite{Poor}.)

\begin{lemma}
On a Riemannian manifold%
\begin{equation*}
\mathrm{div}\left( L_{X}g\right) \left( X\right) =\frac{1}{2}\Delta
\left\vert X\right\vert ^{2}-\left\vert \nabla X\right\vert ^{2}+\mathrm{Ric}%
\left( X,X\right) +D_{X}\mathrm{div}X
\end{equation*}%
When $X=\nabla f$ is a gradient field we have%
\begin{equation*}
\left( \mathrm{div}L_{X}g\right) \left( Z\right) =2\mathrm{Ric}\left(
Z,X\right) +2D_{Z}\mathrm{div}X
\end{equation*}%
or in $\left( 1,1\right) $-tensor notation%
\begin{equation*}
\mathrm{div}\nabla \nabla f=\mathrm{Ric}\left( \nabla f\right) +\nabla
\Delta f
\end{equation*}
\end{lemma}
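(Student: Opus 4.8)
The plan is to prove a single master identity for $\mathrm{div}(L_X g)$ evaluated on an arbitrary vector field and then read off all three displayed formulas by specialization. Writing $L_X g(Y,Z)=\langle \nabla_Y X,Z\rangle+\langle \nabla_Z X,Y\rangle$ and $(\mathrm{div}\,T)(Z)=\sum_i(\nabla_{e_i}T)(e_i,Z)$ in an orthonormal frame, I would fix a point $p$ and choose $\{e_i\}$ so that $\nabla e_i|_p=0$, while leaving $Z$ arbitrary. Expanding $(\nabla_{e_i}L_X g)(e_i,Z)$ at $p$ and differentiating the inner products, the two cross terms $\langle \nabla_{e_i}X,\nabla_{e_i}Z\rangle$ cancel, leaving
\[
(\nabla_{e_i}L_X g)(e_i,Z)=\langle \nabla_{e_i}\nabla_{e_i}X,Z\rangle+\langle \nabla_{e_i}\nabla_Z X,e_i\rangle-\langle \nabla_{\nabla_{e_i}Z}X,e_i\rangle .
\]

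The next step is to apply the curvature commutation $\nabla_{e_i}\nabla_Z X=\nabla_Z\nabla_{e_i}X+\nabla_{[e_i,Z]}X+R(e_i,Z)X$ to the middle term. At $p$ one has $[e_i,Z]=\nabla_{e_i}Z$, so the bracket contribution exactly cancels the third term above. Summing over $i$ then yields the master identity
\[
(\mathrm{div}\,L_X g)(Z)=\langle \Delta X,Z\rangle+\mathrm{Ric}(X,Z)+D_Z\,\mathrm{div}\,X,
\]
where $\Delta X=\sum_i\nabla_{e_i}\nabla_{e_i}X$ is the connection Laplacian, $\sum_i\langle R(e_i,Z)X,e_i\rangle=\mathrm{Ric}(X,Z)$ after using the pair symmetry of $R$, and $\sum_i\langle \nabla_Z\nabla_{e_i}X,e_i\rangle=Z(\mathrm{div}\,X)=D_Z\,\mathrm{div}\,X$ because $\nabla_Z e_i|_p=0$.

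From this master identity the three formulas follow. Setting $Z=X$ and inserting the elementary Bochner identity $\tfrac12\Delta|X|^2=\langle \Delta X,X\rangle+|\nabla X|^2$ produces the first formula. For a gradient field $X=\nabla f$ the tensor $\nabla X=\mathrm{Hess}\,f$ is symmetric, so $L_X g=2\,\mathrm{Hess}\,f$ and $\mathrm{div}\,\nabla\nabla f=\Delta\nabla f$; the master identity specialized to this case reads $\langle \Delta X,Z\rangle=\mathrm{Ric}(X,Z)+D_Z\,\mathrm{div}\,X$, which is precisely $\mathrm{div}\,\nabla\nabla f=\mathrm{Ric}(\nabla f)+\nabla\Delta f$, and multiplying by two gives $(\mathrm{div}\,L_X g)(Z)=2\,\mathrm{Ric}(Z,X)+2D_Z\,\mathrm{div}\,X$.

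The main obstacle is purely bookkeeping: correctly isolating and cancelling the terms involving $\nabla_{e_i}Z$ and $\nabla_Z e_i$ in the non-normal directions, and invoking the curvature commutation with the right index placement so that the Ricci contraction emerges with the correct sign. Once the single commutation of $\nabla_{e_i}\nabla_Z X$ is carried out and the symmetries of $R$ are applied, the remaining work is a direct specialization.
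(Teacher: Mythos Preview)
Your argument is correct and rests on the same computation as the paper: expand $(\nabla_{e_i}L_Xg)(e_i,\cdot)$ in a normal frame and use the curvature commutation $\nabla^2_{e_i,Z}-\nabla^2_{Z,e_i}=R(e_i,Z)$ to extract the Ricci term. The organization differs slightly. The paper carries out two parallel computations---one with $Z=X$ inserted from the outset (so that $\tfrac12\Delta|X|^2$ appears directly), and a second assuming $\nabla X$ self-adjoint---whereas you prove the single general identity $(\mathrm{div}\,L_Xg)(Z)=\langle\Delta X,Z\rangle+\mathrm{Ric}(X,Z)+D_Z\,\mathrm{div}\,X$ for arbitrary $X,Z$, then specialize using $\tfrac12\Delta|X|^2=\langle\Delta X,X\rangle+|\nabla X|^2$ and, in the gradient case, $L_Xg=2\,\mathrm{Hess}\,f$ together with $(\mathrm{div}\,\mathrm{Hess}\,f)(Z)=\langle\Delta\nabla f,Z\rangle$. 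Your route is marginally more economical in that one calculation serves both formulas; the paper's route avoids invoking the connection-Laplacian Bochner identity as a separate input. Either way the substance is identical.
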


\begin{proof}
We calculate with a frame that is parallel at $p$%
\begin{eqnarray*}
&&\mathrm{div}\left( L_{X}g\right) \left( X\right) \\
&=&\left( \nabla _{E_{i}}L_{X}g\right) \left( E_{i},X\right) \\
&=&\nabla _{E_{i}}\left( L_{X}g\left( E_{i},X\right) \right) -L_{X}g\left(
E_{i},\nabla _{E_{i}}X\right) \\
&=&\nabla _{E_{i}}\left( g\left( \nabla _{E_{i}}X,X\right) +g\left(
E_{i},\nabla _{X}X\right) \right) -g\left( \nabla _{E_{i}}X,\nabla
_{E_{i}}X\right) -g\left( E_{i},\nabla _{\nabla _{E_{i}}X}X\right) \\
&=&\Delta \frac{1}{2}\left\vert X\right\vert ^{2}+\nabla _{E_{i}}g\left(
E_{i},\nabla _{X}X\right) -\left\vert \nabla X\right\vert ^{2}-g\left(
E_{i},\nabla _{\nabla _{E_{i}}X}X\right) \\
&=&\Delta \frac{1}{2}\left\vert X\right\vert ^{2}-\left\vert \nabla
X\right\vert ^{2}+g\left( \nabla _{E_{i},X}^{2}X,E_{i}\right) \\
&=&\Delta \frac{1}{2}\left\vert X\right\vert ^{2}-\left\vert \nabla
X\right\vert ^{2}+\mathrm{Ric}\left( X,X\right) +g\left( \nabla
_{X,E_{i}}^{2}X,E_{i}\right) \\
&=&\Delta \frac{1}{2}\left\vert X\right\vert ^{2}-\left\vert \nabla
X\right\vert ^{2}+\mathrm{Ric}\left( X,X\right) +D_{X}\mathrm{div}X
\end{eqnarray*}%
And when $Z\rightarrow \nabla _{Z}X$ is self-adjoint we have 
\begin{eqnarray*}
&&\left( \mathrm{div}L_{X}g\right) \left( Z\right) \\
&=&\left( \nabla _{E_{i}}L_{X}g\right) \left( E_{i},Z\right) \\
&=&\nabla _{E_{i}}\left( L_{X}g\left( E_{i},Z\right) \right) -L_{X}g\left(
E_{i},\nabla _{E_{i}}Z\right) \\
&=&\nabla _{E_{i}}\left( g\left( \nabla _{E_{i}}X,Z\right) +g\left(
E_{i},\nabla _{Z}X\right) \right) -g\left( \nabla _{E_{i}}X,\nabla
_{E_{i}}Z\right) -g\left( E_{i},\nabla _{\nabla _{E_{i}}Z}X\right) \\
&=&\nabla _{E_{i}}\left( g\left( \nabla _{E_{i}}X,Z\right) +g\left(
E_{i},\nabla _{Z}X\right) \right) -g\left( \nabla _{E_{i}}X,\nabla
_{E_{i}}Z\right) -g\left( E_{i},\nabla _{\nabla _{E_{i}}Z}X\right) \\
&=&\nabla _{E_{i}}\left( g\left( \nabla _{Z}X,E_{i}\right) +g\left(
E_{i},\nabla _{E_{i}}\nabla _{Z}X\right) \right) -g\left( \nabla
_{E_{i}}X,\nabla _{E_{i}}Z\right) -g\left( E_{i},\nabla _{\nabla
_{E_{i}}Z}X\right) \\
&=&2g\left( \nabla _{E_{i},Z}^{2}X,E_{i}\right) \\
&=&2\mathrm{Ric}\left( Z,X\right) +2g\left( \nabla
_{Z,E_{i}}^{2}X,E_{i}\right) \\
&=&2\mathrm{Ric}\left( Z,X\right) +2D_{Z}\mathrm{div}X
\end{eqnarray*}
\end{proof}

\begin{corollary}
If $X$ is a Killing field, then%
\begin{equation*}
\Delta \frac{1}{2}\left\vert X\right\vert ^{2}=\left\vert \nabla
X\right\vert ^{2}-\mathrm{Ric}\left( X,X\right)
\end{equation*}
\end{corollary}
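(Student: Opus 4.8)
The plan is to specialize the first identity of the preceding Lemma to the case where $X$ is a Killing field, namely
\[
\mathrm{div}\left( L_{X}g\right) \left( X\right) =\frac{1}{2}\Delta\left\vert X\right\vert ^{2}-\left\vert \nabla X\right\vert ^{2}+\mathrm{Ric}\left( X,X\right) +D_{X}\mathrm{div}X.
\]
The whole strategy is to argue that both the left-hand side and the final term $D_{X}\mathrm{div}X$ on the right vanish identically when $X$ is Killing, and then simply rearrange the remaining terms.

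First I would dispose of the left-hand side using the defining property of a Killing field: its flow consists of isometries, equivalently $L_{X}g=0$ as a $(0,2)$-tensor. Since the Lie derivative of the metric vanishes identically, so does every covariant derivative of it, and hence $\mathrm{div}\left( L_{X}g\right)=0$ as a one-form; in particular its evaluation on $X$ is zero. For the term $D_{X}\mathrm{div}X$, I would use that $\mathrm{div}X=\mathrm{tr}\left( Z\mapsto \nabla _{Z}X\right)$, together with the fact that the condition $L_{X}g=0$ is exactly the statement that the endomorphism $Z\mapsto \nabla _{Z}X$ is skew-symmetric. A skew-symmetric endomorphism has vanishing trace, so $\mathrm{div}X\equiv 0$ and therefore $D_{X}\mathrm{div}X=0$ as well.

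Substituting these two vanishing facts into the Lemma leaves
\[
0=\frac{1}{2}\Delta\left\vert X\right\vert ^{2}-\left\vert \nabla X\right\vert ^{2}+\mathrm{Ric}\left( X,X\right),
\]
and moving the last two terms to the other side yields the claimed identity. There is no genuine obstacle here: all the analytic content already resides in the Lemma, and the only points to verify are the two standard facts that a Killing field has $L_{X}g=0$ and that its covariant derivative is therefore skew-symmetric and divergence free.
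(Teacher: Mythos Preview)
Your argument is correct and is exactly the approach taken in the paper: plug $L_{X}g=0$ and $\mathrm{div}X=0$ into the first identity of the preceding Lemma and rearrange. The paper's proof is a one-line remark to this effect, but the content is identical.
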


\begin{proof}
Use that $L_{X}g=0=\mathrm{div}X$ in the above formula.
\end{proof}

\begin{corollary}
If $X$ is a gradient field, then%
\begin{equation*}
\Delta \frac{1}{2}\left\vert X \right\vert ^{2}=\left\vert \nabla
X\right\vert ^{2}+D_{X}\mathrm{div}X+\mathrm{Ric}\left( X,X\right)
\end{equation*}
\end{corollary}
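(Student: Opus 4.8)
The plan is to combine the two identities already established in the Lemma. The first holds for an arbitrary vector field $X$:
\begin{equation*}
\mathrm{div}\left( L_{X}g\right) \left( X\right) =\frac{1}{2}\Delta \left\vert X\right\vert ^{2}-\left\vert \nabla X\right\vert ^{2}+\mathrm{Ric}\left( X,X\right) +D_{X}\mathrm{div}X.
\end{equation*}
The second holds whenever $Z\mapsto \nabla _{Z}X$ is self-adjoint, which is exactly the condition that $X$ be (locally) a gradient field, and reads
\begin{equation*}
\left( \mathrm{div}L_{X}g\right) \left( Z\right) =2\mathrm{Ric}\left( Z,X\right) +2D_{Z}\mathrm{div}X.
\end{equation*}

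First I would specialize the second identity to $Z=X$, obtaining
\begin{equation*}
\mathrm{div}\left( L_{X}g\right) \left( X\right) =2\mathrm{Ric}\left( X,X\right) +2D_{X}\mathrm{div}X.
\end{equation*}
Then I would substitute this expression for the left-hand side of the first identity. Equating the two and canceling one copy each of $\mathrm{Ric}\left( X,X\right) $ and $D_{X}\mathrm{div}X$ isolates $\frac{1}{2}\Delta \left\vert X\right\vert ^{2}$ on one side and leaves $\left\vert \nabla X\right\vert ^{2}+D_{X}\mathrm{div}X+\mathrm{Ric}\left( X,X\right) $ on the other, which is precisely the claimed formula.

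There is essentially no obstacle here: the entire content already resides in the Lemma, and the corollary is simply the algebraic consequence of evaluating the gradient-field divergence identity along $Z=X$ and feeding the result into the general formula. The only point worth flagging is that invoking the second identity at all relies on $X=\nabla f$, so that the shape operator $\nabla X=\mathrm{Hess}f$ is symmetric; this is exactly the standing hypothesis of the corollary, so the step is justified.
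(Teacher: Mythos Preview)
Your proposal is correct and matches the paper's own proof exactly: the paper simply says to set $Z=X$ in the second identity and equate with the first to obtain the formula.
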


\begin{proof}
Let $Z=X$ in the second equation above and equate them to get the formula.
\end{proof}

We are now ready to derive formulas for Ricci solitons%
\begin{equation*}
\mathrm{Ric}+\frac{1}{2}L_{X}g=\lambda g
\end{equation*}

\begin{lemma}
\label{Sol1} A Ricci soliton satisfies%
\begin{equation*}
\frac{1}{2}\left( \Delta -D_{X}\right) \left\vert X\right\vert
^{2}=\left\vert \nabla X\right\vert ^{2}-\lambda \left\vert X\right\vert ^{2}
\end{equation*}
\end{lemma}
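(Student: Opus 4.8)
The plan is to feed the Ricci soliton equation into the general divergence identity established at the start of this section. Solving that identity for $\frac{1}{2}\Delta \left\vert X\right\vert ^{2}$ gives
\[
\frac{1}{2}\Delta \left\vert X\right\vert ^{2}=\mathrm{div}\left( L_{X}g\right) \left( X\right) +\left\vert \nabla X\right\vert ^{2}-\mathrm{Ric}\left( X,X\right) -D_{X}\mathrm{div}X,
\]
and comparing this with the desired formula shows that it suffices to establish two facts: first, that $\mathrm{Ric}\left( X,X\right) =\lambda \left\vert X\right\vert ^{2}-\frac{1}{2}D_{X}\left\vert X\right\vert ^{2}$, and second, that the two ``source'' terms agree, $\mathrm{div}\left( L_{X}g\right) \left( X\right) =D_{X}\mathrm{div}X$. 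Granting both, the two copies of $D_{X}\mathrm{div}X$ cancel and substituting the value of $\mathrm{Ric}\left( X,X\right) $ produces exactly $\frac{1}{2}\left( \Delta -D_{X}\right) \left\vert X\right\vert ^{2}=\left\vert \nabla X\right\vert ^{2}-\lambda \left\vert X\right\vert ^{2}$.

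The first fact is immediate and uses only the pointwise soliton equation. Evaluating $\mathrm{Ric}+\frac{1}{2}L_{X}g=\lambda g$ on the pair $\left( X,X\right) $ and noting that $\left( L_{X}g\right) \left( X,X\right) =2g\left( \nabla _{X}X,X\right) =D_{X}\left\vert X\right\vert ^{2}$ gives $\mathrm{Ric}\left( X,X\right) +\frac{1}{2}D_{X}\left\vert X\right\vert ^{2}=\lambda \left\vert X\right\vert ^{2}$, which rearranges to the claimed value.

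The second fact is the heart of the matter, and here the soliton equation must be differentiated rather than merely evaluated. I would take the divergence of the full tensor equation. Since $g$ is parallel we have $\mathrm{div}\,g=0$, and the contracted second Bianchi identity gives $\mathrm{div}\,\mathrm{Ric}=\frac{1}{2}d\,\mathrm{scal}$; hence $\mathrm{div}\left( L_{X}g\right) =-d\,\mathrm{scal}$ as $1$-forms. On the other hand, tracing the soliton equation and using $\mathrm{tr}\left( L_{X}g\right) =2\,\mathrm{div}X$ yields $\mathrm{scal}+\mathrm{div}X=\lambda n$, so that $d\left( \mathrm{div}X\right) =-d\,\mathrm{scal}$. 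Comparing the two gives $\mathrm{div}\left( L_{X}g\right) =d\left( \mathrm{div}X\right) $, and evaluating on $X$ yields $\mathrm{div}\left( L_{X}g\right) \left( X\right) =D_{X}\mathrm{div}X$.

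The main obstacle is precisely this second step: recognizing that the generic remainder term $D_{X}\mathrm{div}X$ left over in the opening divergence formula is forced to equal $\mathrm{div}\left( L_{X}g\right) \left( X\right) $ on a soliton, so that these two terms annihilate one another. This cancellation is not pointwise; it relies on differentiating the soliton equation and invoking the contracted Bianchi identity together with the trace relation $\mathrm{div}X=\lambda n-\mathrm{scal}$. Once it is in place, everything else is bookkeeping, and in particular the general (possibly non-gradient) nature of $X$ causes no trouble, since nowhere did I need $X$ to be a gradient field.
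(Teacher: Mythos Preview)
Your proof is correct and follows essentially the same route as the paper: both combine the general divergence identity for $L_{X}g$ with the trace of the soliton equation, the contracted Bianchi identity, and the divergence of the soliton equation to obtain $\mathrm{div}(L_{X}g)(X)=D_{X}\mathrm{div}X$, and then substitute $\mathrm{Ric}(X,X)=\lambda|X|^{2}-\tfrac{1}{2}D_{X}|X|^{2}$. The only difference is packaging: the paper first isolates $\tfrac{1}{2}\Delta|X|^{2}=|\nabla X|^{2}-\mathrm{Ric}(X,X)$ and then substitutes, whereas you phrase the same cancellation as two separate ``facts.''
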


\begin{proof}
The trace of the soliton equation says that%
\begin{equation*}
\mathrm{scal}+\mathrm{div}X=n\lambda
\end{equation*}%
so%
\begin{equation*}
D_{Z}\mathrm{scal}=-D_{Z}\mathrm{div}X
\end{equation*}%
The contracted second Bianchi identity that forms the basis for Einstein's
equations says that%
\begin{equation*}
D_{Z}\mathrm{scal}=2\mathrm{div}\mathrm{Ric}\left( Z\right)
\end{equation*}%
Using $Z=X$ and the soliton equation then gives%
\begin{eqnarray*}
-D_{X}\mathrm{div}X &=&2\mathrm{div}\mathrm{Ric}\left( X\right) \\
&=&-\mathrm{div}\left( L_{X}g\right) \left( X\right) \\
&=&-\left( \frac{1}{2}\Delta \left\vert X\right\vert ^{2}-\left\vert \nabla
X\right\vert ^{2}+\mathrm{Ric}\left( X,X\right) +D_{X}\mathrm{div}X\right)
\end{eqnarray*}%
Thus%
\begin{eqnarray*}
\frac{1}{2}\Delta \left\vert X\right\vert ^{2} &=&\left\vert \nabla
X\right\vert ^{2}-\mathrm{Ric}\left( X,X\right) \\
&=&\left\vert \nabla X\right\vert ^{2}+\frac{1}{2}\left( L_{X}g\right)
\left( X,X\right) -\lambda \left\vert X\right\vert ^{2} \\
&=&\left\vert \nabla X\right\vert ^{2}+\frac{1}{2}D_{X}\left\vert
X\right\vert ^{2}-\lambda \left\vert X\right\vert ^{2}
\end{eqnarray*}%
from which we get the equation.
\end{proof}

We now turn our attention to gradient solitons. In this case we can use $%
\left( 1,1\right) $-tensors and write the soliton equation as%
\begin{equation*}
\mathrm{Ric}+\nabla \nabla f=\lambda I
\end{equation*}%
or in condensed form%
\begin{eqnarray*}
\mathrm{Ric}+S &=&\lambda I, \\
S &=&\nabla \nabla f
\end{eqnarray*}%
With this notation we can now state and prove some interesting formulas for
the scalar curvature of gradient solitons. The first and last are known (see 
\cite{Chow-Lu-Ni}), while the middle ones seem to be new.

\begin{lemma}
\label{Sol2} A gradient soliton satisfies%
\begin{equation*}
\nabla \mathrm{scal}=2\mathrm{Ric}\left( \nabla f\right)
\end{equation*}%
\begin{eqnarray*}
\nabla _{\nabla f}S+S\circ \left( S-\lambda I\right) &=&-R\left( \cdot
,\nabla f\right) \nabla f-\frac{1}{2}\nabla _{\cdot }\nabla \mathrm{scal}, \\
\nabla _{\nabla f}\mathrm{Ric}+\mathrm{Ric}\circ \left( \lambda I-\mathrm{Ric%
}\right) &=&R\left( \cdot ,\nabla f\right) \nabla f+\frac{1}{2}\nabla
_{\cdot }\nabla \mathrm{scal}
\end{eqnarray*}%
\begin{equation*}
\frac{1}{2}\left( \Delta -D_{\nabla f}\right) \mathrm{scal}=\frac{1}{2}%
\Delta _{f}\mathrm{scal}=\mathrm{tr}\left( \mathrm{Ric}\circ \left( \lambda
I-\mathrm{Ric}\right) \right)
\end{equation*}
\end{lemma}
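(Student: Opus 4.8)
The plan is to derive the first identity directly from the soliton equation, to obtain the two central $(1,1)$-tensor formulas from a single commutation identity for the third covariant derivative of $f$, and finally to recover the scalar Laplacian formula by tracing.

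For the first formula I would take the divergence of the soliton equation $\mathrm{Ric}+S=\lambda I$. The contracted second Bianchi identity gives $\mathrm{div}\,\mathrm{Ric}=\frac{1}{2}\nabla\mathrm{scal}$, while the gradient case of the opening lemma gives $\mathrm{div}\,S=\mathrm{div}\,\nabla\nabla f=\mathrm{Ric}\left(\nabla f\right)+\nabla\Delta f$. Tracing the soliton equation yields $\mathrm{scal}+\Delta f=n\lambda$, hence $\nabla\Delta f=-\nabla\mathrm{scal}$. Combining these three facts collapses everything to $\nabla\mathrm{scal}=2\mathrm{Ric}\left(\nabla f\right)$.

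The heart of the argument is the second identity. Here I would start from the commutation rule for the Hessian $S=\nabla\nabla f$, namely $\left(\nabla_X S\right)\left(Y\right)-\left(\nabla_Y S\right)\left(X\right)=R\left(X,Y\right)\nabla f$, which is just the statement that third covariant derivatives of the function $f$ fail to commute by the curvature acting on $\nabla f$. Setting $X=\nabla f$ gives $\left(\nabla_{\nabla f}S\right)\left(Y\right)=\left(\nabla_Y S\right)\left(\nabla f\right)-R\left(Y,\nabla f\right)\nabla f$. Expanding $\left(\nabla_Y S\right)\left(\nabla f\right)=\nabla_Y\left(S\left(\nabla f\right)\right)-S\left(S\left(Y\right)\right)$ and using the soliton equation together with the first formula to write $S\left(\nabla f\right)=\lambda\nabla f-\mathrm{Ric}\left(\nabla f\right)=\lambda\nabla f-\frac{1}{2}\nabla\mathrm{scal}$, I expect the $S\circ S$ terms to cancel against $S\circ\left(S-\lambda I\right)$ and the remaining pieces to assemble into the claimed identity. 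The step I expect to be the main obstacle is keeping the commutation sign convention consistent, since this is precisely what fixes the sign of the radial curvature term $R\left(\cdot,\nabla f\right)\nabla f$; a slip there propagates into the third and fourth formulas.

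The third formula should then follow by pure algebra: substituting $S=\lambda I-\mathrm{Ric}$ turns $\nabla_{\nabla f}S$ into $-\nabla_{\nabla f}\mathrm{Ric}$ and $S\circ\left(S-\lambda I\right)$ into $-\mathrm{Ric}\circ\left(\lambda I-\mathrm{Ric}\right)$, so the second formula transforms into the third up to an overall sign. Finally, for the last formula I would take the metric trace of the third identity. The trace of $\nabla_{\nabla f}\mathrm{Ric}$ is $D_{\nabla f}\mathrm{scal}$, the trace of $R\left(\cdot,\nabla f\right)\nabla f$ is $\mathrm{Ric}\left(\nabla f,\nabla f\right)$, and the trace of $\frac{1}{2}\nabla_\cdot\nabla\mathrm{scal}$ is $\frac{1}{2}\Delta\mathrm{scal}$. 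Using the first formula once more in the form $D_{\nabla f}\mathrm{scal}=2\mathrm{Ric}\left(\nabla f,\nabla f\right)$ eliminates the cross terms and leaves $\frac{1}{2}\left(\Delta-D_{\nabla f}\right)\mathrm{scal}=\mathrm{tr}\left(\mathrm{Ric}\circ\left(\lambda I-\mathrm{Ric}\right)\right)$, which is the assertion once we recognize $\Delta_f=\Delta-D_{\nabla f}$.
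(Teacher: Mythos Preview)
Your proposal is correct and follows essentially the same route as the paper: both derive the first identity from the divergence of the soliton equation together with the contracted Bianchi identity and the traced soliton equation, both obtain the middle pair from the curvature commutation $R(E,\nabla f)\nabla f=\nabla^{2}_{E,\nabla f}\nabla f-\nabla^{2}_{\nabla f,E}\nabla f$ combined with the soliton relation and the first formula, and both finish by tracing. The only cosmetic difference is that the paper expands $(\nabla_{E}S)(\nabla f)$ by first rewriting it as $-(\nabla_{E}\mathrm{Ric})(\nabla f)$, whereas you expand it directly in terms of $S$ and $S(\nabla f)=\lambda\nabla f-\tfrac{1}{2}\nabla\mathrm{scal}$; the computations are equivalent line by line.
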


\begin{proof}
We have the Bochner formula%
\begin{equation*}
\mathrm{div}\left( \nabla \nabla f\right) =\mathrm{Ric}\left( \nabla
f\right) +\nabla \Delta f
\end{equation*}%
The trace of the soliton equation gives%
\begin{eqnarray*}
\mathrm{scal}+\Delta f &=&n\lambda , \\
\nabla \mathrm{scal}+\nabla \Delta f &=&0
\end{eqnarray*}%
while the divergence of the soliton equation gave us%
\begin{equation*}
\mathrm{div}\mathrm{Ric}+\mathrm{div}\left( \nabla \nabla f\right) =0
\end{equation*}%
Together this yields%
\begin{eqnarray*}
\nabla \mathrm{scal} &=&2\mathrm{div}\mathrm{Ric} \\
&=&-2\mathrm{div}\left( \nabla \nabla f\right) \\
&=&-2\mathrm{Ric}\left( \nabla f\right) -2\nabla \Delta f \\
&=&-2\mathrm{Ric}\left( \nabla f\right) +2\nabla \mathrm{scal}
\end{eqnarray*}%
and hence the first formula.

Using this one can immediately find a formula for the Laplacian of the
scalar curvature. However our goal is the establish the second set of
formulas. The last formula is then obtained by taking traces.

We use the equation%
\begin{equation*}
R\left( E,\nabla f\right) \nabla f=\nabla _{E,\nabla f}^{2}\nabla f-\nabla
_{\nabla f,E}^{2}\nabla f
\end{equation*}%
The second term on the right%
\begin{equation*}
\nabla _{\nabla f,E}^{2}\nabla f=\left( \nabla _{\nabla f}S\right) \left(
E\right)
\end{equation*}%
while the first can be calculated%
\begin{eqnarray*}
\nabla _{E,\nabla f}^{2}\nabla f &=&-\left( \nabla _{E}\mathrm{Ric}\right)
\left( \nabla f\right) \\
&=&-\nabla _{E}\mathrm{Ric}\left( \nabla f\right) +\mathrm{Ric}\left( \nabla
_{E}\nabla f\right) \\
&=&-\frac{1}{2}\nabla _{E}\nabla \mathrm{scal}+\mathrm{Ric}\circ S\left(
E\right) \\
&=&-\frac{1}{2}\nabla _{E}\nabla \mathrm{scal}+\left( \lambda I-S\right)
\circ S\left( E\right) \\
&=&-\frac{1}{2}\nabla _{E}\nabla \mathrm{scal}+\mathrm{Ric}\circ \left(
\lambda I-\mathrm{Ric}\right)
\end{eqnarray*}%
This yields the set of formulas in the middle.

Taking traces in%
\begin{equation*}
\nabla _{\nabla f}\mathrm{Ric}+\mathrm{Ric}\circ \left( \lambda I-\mathrm{Ric%
}\right) =R\left(E ,\nabla f\right) \nabla f+\frac{1}{2}\nabla _{E}\nabla 
\mathrm{scal}
\end{equation*}%
yields%
\begin{equation*}
\nabla _{\nabla f}\mathrm{scal}+\mathrm{tr}\left( \mathrm{Ric}\circ \left(
\lambda I-\mathrm{Ric}\right) \right) =\mathrm{Ric}\left( \nabla f,\nabla
f\right) +\frac{1}{2}\Delta \mathrm{scal}
\end{equation*}%
Since%
\begin{equation*}
\mathrm{Ric}\left( \nabla f,\nabla f\right) =\frac{1}{2}D_{\nabla f}\mathrm{%
scal}
\end{equation*}%
we immediately get the last equation.
\end{proof}

Note that if $\lambda _{i}$ are the eigenvalues of the Ricci tensor then the
last equation can be rewritten in several useful ways%
\begin{eqnarray*}
\frac{1}{2}\Delta _{f}\mathrm{scal} &=&\mathrm{tr}\left( \mathrm{Ric}\circ
\left( \lambda I-\mathrm{Ric}\right) \right) \\
&=&\sum \lambda _{i}\left( \lambda -\lambda _{i}\right) \\
&=&-\left\vert \mathrm{Ric}\right\vert ^{2}+\lambda \mathrm{scal} \\
&=&-\left\vert \mathrm{Ric}-\frac{1}{n}\mathrm{scal}g\right\vert ^{2}+%
\mathrm{scal}\left( \lambda -\frac{1}{n}\mathrm{scal}\right)
\end{eqnarray*}

\section{Rigidity Characterization}

We start with a motivational appetizer on rigidity of gradient solitons.

\begin{proposition}
\label{Ein-Sol} A gradient soliton which is Einstein, either has $\mathrm{%
Hess}f=0$ or is a Gaussian.
\end{proposition}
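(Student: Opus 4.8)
The plan is to exploit the Einstein condition to collapse the soliton equation into a single equation for the Hessian of $f$, and then to recognize the resulting potential. Since the metric is Einstein we may write $\mathrm{Ric}=\mu g$ for a constant $\mu$, so the soliton equation $\mathrm{Ric}+\mathrm{Hess}f=\lambda g$ becomes
\[
\mathrm{Hess}f=(\lambda-\mu)g=:c\,g.
\]
If $c=0$ we are immediately in the first alternative $\mathrm{Hess}f=0$, so I would assume $c\neq 0$, equivalently that $f$ is nonconstant.

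First I would pin down the Einstein constant. The first formula of Lemma \ref{Sol2} gives $\nabla\mathrm{scal}=2\,\mathrm{Ric}(\nabla f)=2\mu\,\nabla f$. An Einstein metric has $\mathrm{scal}=n\mu$ constant, so the left-hand side vanishes; since $\nabla f\neq 0$ on a nonempty open set this forces $\mu=0$. Hence the metric is Ricci flat and $\mathrm{Hess}f=\lambda g$ with $\lambda\neq 0$.

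Next I would extract the rigid Euclidean structure from this equation. Writing $S=\nabla\nabla f=\lambda I$ and using that the Levi--Civita connection is torsion free, a direct computation yields
\[
R(X,Y)\nabla f=\nabla_X(\lambda Y)-\nabla_Y(\lambda X)-\lambda[X,Y]=0,
\]
so the soliton is radially flat. On the (complete) manifold $f$ --- or $-f$ when $\lambda<0$ --- is a proper strictly convex function, hence has a unique critical point $p$; and from $\nabla\lvert\nabla f\rvert^2=2\lambda\nabla f$ one gets $f=\tfrac{\lambda}{2}r^2+\mathrm{const}$, where $r=d(\cdot,p)$. Computing with $\nabla_X\nabla f=\lambda X$ one finds $\nabla_{\partial_r}\partial_r=0$ and $\nabla_X\partial_r=\tfrac{1}{r}X$ for $X$ tangent to a level set, so away from $p$ the metric is the cone $dr^2+r^2 g_N$ over a level set $N$.

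Finally, smoothness of the metric at the cone tip $p$ forces $g_N$ to be the round unit sphere metric, so that $M$ is isometric to flat $\mathbb{R}^n$ and $f=\tfrac{\lambda}{2}\lvert x\rvert^2$ up to an irrelevant additive constant, i.e. a Gaussian. (This last step is exactly Tashiro's classification of complete manifolds admitting a nonconstant solution of $\mathrm{Hess}f=\lambda g$.) I expect this final step to be the main obstacle: the Einstein reduction and the curvature computation are routine, whereas upgrading $\mathrm{Hess}f=\lambda g$ to honest flatness requires the global cone structure together with the smoothness condition at the critical point (or an appeal to the concircular-function literature), which is where the real content lies.
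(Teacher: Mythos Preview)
Your proof is correct and follows essentially the same route as the paper: reduce to $\mathrm{Hess}f = c\,g$ with $c\neq 0$, then invoke the classical rigidity of complete manifolds carrying such a function (cone structure with a smooth tip, i.e.\ Tashiro). The only organizational difference is that you first force $\mu=0$ via the soliton identity $\nabla\mathrm{scal}=2\,\mathrm{Ric}(\nabla f)$, whereas the paper simply rescales $f$ to normalize $c=1$ and obtains flatness (hence $\mu=0$) at the end; your detour is correct but not needed.
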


\begin{proof}
Assume that 
\begin{equation*}
\mu g+\mathrm{Hess}f=\lambda g.
\end{equation*}%
If $\mu =\lambda ,$ then the Hessian vanishes. Otherwise we have that the
Hessian is proportional to $g.$ Multiplying $f$ by a constant then leads us
to a situation where 
\begin{equation*}
\mathrm{Hess}f=g.
\end{equation*}%
This shows that $f$ is a proper strictly convex function. By adding a
suitable constant to $f$ we also see that $r=\sqrt{f}$ is a distance
function from the unique minimum of $f.$ It is now easy to see that the
radial curvatures vanish and then that the space is flat (see also \cite%
{Petersen})
\end{proof}

Next we dispense with rigidity for compact solitons.

\begin{theorem}
A compact Ricci soliton with 
\begin{equation*}
\mathrm{Ric}\left( X,X\right) \leq 0
\end{equation*}%
is Einstein with Einstein constant $\lambda .$ In particular, compact
gradient solitions with constant scalar curvature are Einstein.
\end{theorem}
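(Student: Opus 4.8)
The plan is to combine the Bochner-type identity of Lemma \ref{Sol1} with the soliton equation to produce a formula whose integral over the compact manifold immediately forces $X$ to be parallel. First I would eliminate the drift term $D_X|X|^2$ from Lemma \ref{Sol1}. Since $\tfrac12 D_X|X|^2 = g(\nabla_X X,X) = \tfrac12(L_X g)(X,X)$ and the soliton equation gives $\tfrac12(L_X g)(X,X) = \lambda|X|^2 - \mathrm{Ric}(X,X)$, substituting into Lemma \ref{Sol1} yields the clean identity
\begin{equation*}
\frac12\Delta|X|^2 = |\nabla X|^2 - \mathrm{Ric}(X,X),
\end{equation*}
which is in fact the intermediate formula appearing in the proof of Lemma \ref{Sol1}.

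Next I would integrate this over the compact manifold $M$. The left-hand side integrates to zero by the divergence theorem, so $\int_M |\nabla X|^2 = \int_M \mathrm{Ric}(X,X)$. The hypothesis $\mathrm{Ric}(X,X)\le 0$ makes the right-hand integral $\le 0$, while the left-hand integral is $\ge 0$; hence both vanish, and since the integrands have a fixed sign we conclude $\nabla X\equiv 0$. A parallel $X$ satisfies $L_X g=0$, so the soliton equation collapses to $\mathrm{Ric}=\lambda g$, i.e.\ the metric is Einstein with Einstein constant $\lambda$.

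For the final assertion, take $X=\nabla f$ with $\mathrm{scal}$ constant. By the first formula of Lemma \ref{Sol2}, $\nabla\mathrm{scal}=2\,\mathrm{Ric}(\nabla f)$, so constancy of the scalar curvature gives $\mathrm{Ric}(\nabla f)=0$ and in particular $\mathrm{Ric}(\nabla f,\nabla f)=0\le 0$; the hypothesis of the theorem is then met and the gradient soliton is Einstein. There is no serious obstacle in this argument: the one point requiring care is the bookkeeping of signs in passing from Lemma \ref{Sol1} to the displayed identity, since it is precisely the sign of $\mathrm{Ric}(X,X)$ that allows the two nonnegative integrals to pinch each other to zero.
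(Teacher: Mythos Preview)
Your proof is correct and follows essentially the same approach as the paper: the identity $\tfrac12\Delta|X|^2=|\nabla X|^2-\mathrm{Ric}(X,X)$ (which, as you note, is the intermediate step in the proof of Lemma~\ref{Sol1}) together with the divergence theorem on the compact manifold forces $\nabla X\equiv 0$, and the gradient case is reduced to this via $\nabla\mathrm{scal}=2\,\mathrm{Ric}(\nabla f)$. The only cosmetic difference is that the paper phrases the integration step as ``$\Delta\tfrac12|X|^2\ge 0$ plus the divergence theorem,'' whereas you integrate the identity directly and split the two sign-definite terms.
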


\begin{proof}
We have a Ricci soliton%
\begin{equation*}
\mathrm{Ric}+L_{X}g=\lambda g.
\end{equation*}%
The Laplacian of $X$ then satisfies%
\begin{eqnarray*}
\Delta \frac{1}{2}\left\vert X\right\vert ^{2} &=&\left\vert \nabla
X\right\vert ^{2}-\mathrm{Ric}\left( X,X\right) \\
&\geq &0
\end{eqnarray*}%
The divergence theorem then shows that $\nabla X$ vanishes. In particular $%
L_{X}g=0.$

The second part is a simple consequence of having $X=\nabla f$ and the
equation%
\begin{equation*}
D_{\nabla f}\mathrm{scal}=2\mathrm{Ric}\left( \nabla f,\nabla f\right) .
\end{equation*}
\end{proof}

We also note that, when the Ricci tensor has a definite sign, having zero
radial Ricci curvature is equivalent to having constant scalar curvature. In
particular this implies the equivalence of condition (4) in Proposition 1.

\begin{proposition}
A gradient soliton with nonnegative (or nonpositive) Ricci curvature has
constant scalar curvature if and only if $\mathrm{Ric}(\nabla f, \nabla f)=0$%
.
\end{proposition}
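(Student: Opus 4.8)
The plan is to base everything on the first identity of Lemma \ref{Sol2}, which for a gradient soliton reads
\begin{equation*}
\nabla \mathrm{scal} = 2\,\mathrm{Ric}\left(\nabla f\right).
\end{equation*}
This single equation already disposes of the forward implication: if the scalar curvature is constant then the left-hand side vanishes, so $\mathrm{Ric}\left(\nabla f\right)=0$ as a \emph{vector}, and in particular $\mathrm{Ric}\left(\nabla f,\nabla f\right)=0$. Note that this direction needs no hypothesis on the sign of the Ricci curvature.

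For the converse, where the sign assumption becomes essential, I would reduce the entire problem to an elementary pointwise fact of linear algebra: if $A$ is a symmetric positive semidefinite endomorphism of an inner product space, then $\langle Av,v\rangle=0$ forces $Av=0$. Applying this pointwise with $A=\mathrm{Ric}$ and $v=\nabla f$ (using the hypothesis $\mathrm{Ric}\geq 0$ together with $\mathrm{Ric}\left(\nabla f,\nabla f\right)=0$) gives $\mathrm{Ric}\left(\nabla f\right)=0$ at every point. Feeding this back into the displayed identity yields $\nabla \mathrm{scal}=0$, so the scalar curvature is constant. The nonpositive case is identical after replacing $\mathrm{Ric}$ by $-\mathrm{Ric}$, which is then positive semidefinite and is annihilated by $\nabla f$ in the quadratic form by the same assumption.

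To justify the linear algebra step I would simply diagonalize: writing $\mathrm{Ric}=\sum_i \lambda_i\, e_i\otimes e_i$ in an orthonormal eigenbasis with $\lambda_i\geq 0$, the quantity $\mathrm{Ric}\left(\nabla f,\nabla f\right)=\sum_i \lambda_i\,\langle \nabla f,e_i\rangle^2$ is a sum of nonnegative terms, so its vanishing forces $\langle \nabla f,e_i\rangle=0$ whenever $\lambda_i>0$; hence $\mathrm{Ric}\left(\nabla f\right)=\sum_i \lambda_i\,\langle \nabla f,e_i\rangle\, e_i=0$.

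There is essentially no serious obstacle here: the content of the proposition is the identity $\nabla\mathrm{scal}=2\,\mathrm{Ric}\left(\nabla f\right)$, already established in Lemma \ref{Sol2}, combined with the definiteness of $\mathrm{Ric}$. The only point deserving care is that the linear algebra argument is carried out pointwise — at each $p\in M$ the eigenvalues and eigenvectors of $\mathrm{Ric}_{p}$ vary, but the conclusion $\mathrm{Ric}_{p}\left(\nabla f\right)=0$ holds at every $p$ regardless, so the resulting equation $\nabla\mathrm{scal}=0$ is global and requires no continuity or smoothness of the eigendata.
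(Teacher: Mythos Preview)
Your proof is correct and follows exactly the same approach as the paper: both use the identity $\nabla\mathrm{scal}=2\,\mathrm{Ric}(\nabla f)$ from Lemma \ref{Sol2} together with the elementary linear algebra fact that $\langle Tv,v\rangle=0$ implies $Tv=0$ for a semidefinite self-adjoint operator. You simply give more detail, spelling out both directions separately and justifying the linear algebra step by diagonalization.
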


\begin{proof}
We know from elementary linear algebra that, for a nonnegative (or
nonpositive) definite, self-adjoint operator $T$, 
\begin{equation*}
\langle Tv,v \rangle = 0 \qquad \Rightarrow \qquad Tv = 0.
\end{equation*}
So the proposition follows easily by taking $T$ to be the $(1,1)$-Ricci
tensor and the fact that $\nabla \mathrm{scal} = 2 \mathrm{Ric}(\nabla f)$
for a gradient soliton.
\end{proof}

Steady solitons are also easy to deal with

\begin{proposition}
A steady gradient soliton with constant scalar curvature is Ricci flat.
Moreover, if $f$ is not constant then it is a product of a Ricci flat
manifold with $\mathbb{R}.$
\end{proposition}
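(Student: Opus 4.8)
The plan is to read off Ricci flatness directly from the trace identity in Lemma~\ref{Sol2} and then exploit the rigidity this imposes on $f$. First I would specialize the identity
\[
\frac{1}{2}\Delta _f\,\mathrm{scal}=-\left\vert \mathrm{Ric}\right\vert ^2+\lambda \,\mathrm{scal}
\]
to the steady case $\lambda =0$, so that it becomes $\frac{1}{2}\Delta _f\,\mathrm{scal}=-\left\vert \mathrm{Ric}\right\vert ^2$. Since the scalar curvature is assumed constant, both $\Delta \,\mathrm{scal}$ and $D_{\nabla f}\,\mathrm{scal}$ vanish, hence $\Delta _f\,\mathrm{scal}=0$; the displayed identity then forces $\left\vert \mathrm{Ric}\right\vert ^2\equiv 0$ and therefore $\mathrm{Ric}=0$. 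This establishes the first assertion and requires no work beyond invoking the lemma.

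Granting $\mathrm{Ric}=0$, the soliton equation $\mathrm{Ric}+\mathrm{Hess}f=0$ collapses to $\mathrm{Hess}f=0$, i.e.\ $\nabla \nabla f=0$. Thus $\nabla f$ is a parallel vector field, and in particular $\left\vert \nabla f\right\vert $ is constant. If $f$ is nonconstant this constant is positive, so $M$ carries a nowhere-vanishing parallel vector field.

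For the second assertion I would invoke the splitting associated to a parallel vector field (the de~Rham decomposition). The orthogonal distribution $\left( \nabla f\right) ^{\perp}$ is then parallel, hence integrable and totally geodesic, and its leaves are exactly the level sets of $f$, which are mutually isometric under the flow of $\nabla f$. Writing $N$ for one such level set and using the flow of $\nabla f$ (whose orbits are geodesics) to coordinatize the complementary direction, one identifies $M$ with the Riemannian product $N\times \mathbb{R}$. Because the Ricci tensor of a product splits as the direct sum of the Ricci tensors of the factors and the $\mathbb{R}$-factor is flat, the Ricci flatness of $M$ passes to $N$, so $N$ is Ricci flat.

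The step needing the most care---and the main obstacle---is the \emph{global} nature of this splitting, since the de~Rham theorem yields an isometric product only on the universal cover in general. Here, however, $\nabla f$ is the honest gradient of the globally defined function $f$: along a flow line $\gamma $ of $\nabla f$ one has $\frac{d}{dt}f(\gamma (t))=\left\vert \nabla f\right\vert ^2>0$, so $f$ is strictly monotone along the $\mathbb{R}$-direction and the orbits cannot close up. This single-valuedness of $f$ rules out a circle factor and pins down the line direction globally, upgrading the local product structure to the asserted global one; I would make this precise by verifying that $f$ has no critical points and that, on a complete soliton, its flow lines are complete geodesics meeting each level set exactly once.
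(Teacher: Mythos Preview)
Your proof is correct and follows essentially the same route as the paper: use the $\Delta_f\,\mathrm{scal}$ identity from Lemma~\ref{Sol2} with $\lambda=0$ and constant scalar curvature to force $\mathrm{Ric}=0$, then observe $\mathrm{Hess}f=0$ and split along the parallel gradient. The paper uses the traceless-Ricci form of the same identity and is far more terse about the splitting step (it simply asserts that the manifold ``splits along the gradient of $f$''), whereas you spell out the de~Rham argument and the reason the $\mathbb{R}$-factor is global; but the strategy is identical.
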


\begin{proof}
First we note that 
\begin{eqnarray*}
0 &=&\frac{1}{2}\Delta _{f}\mathrm{scal} \\
&=&-\left\vert \mathrm{Ric}-\frac{1}{n}\mathrm{scal}g\right\vert ^{2}+%
\mathrm{scal}\left( \lambda -\frac{1}{n}\mathrm{scal}\right) \\
&=&-\left\vert \mathrm{Ric}-\frac{1}{n}\mathrm{scal}g\right\vert ^{2}-\frac{1%
}{n}\mathrm{scal}^{2} \\
&\leq &0
\end{eqnarray*}%
Thus $\mathrm{scal}=0$ and $\mathrm{Ric}=0.$ This shows that $\mathrm{Hess}%
f=0.$ Thus $f$ is either constant or the manifold splits along the gradient
of $f.$
\end{proof}

This partly motivates our next result.

\begin{proposition}
\label{Prop} Assume that we have a gradient soliton%
\begin{equation*}
\mathrm{Ric}+\mathrm{Hess}f=\lambda g
\end{equation*}%
with constant scalar curvature and $\lambda \neq 0.$ When $\lambda >0$ we
have $0\leq \mathrm{scal}\leq n\lambda .$ When $\lambda <0$ we have $%
n\lambda \leq \mathrm{scal}\leq 0$. In either case the metric is Einstein
when the scalar curvature equals either of the extreme values.
\end{proposition}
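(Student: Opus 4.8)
The plan is to extract everything from the fourth form of the scalar Laplacian identity established in Lemma \ref{Sol2}, namely
\begin{equation*}
\frac{1}{2}\Delta_f \mathrm{scal} = -\left\vert \mathrm{Ric}-\frac{1}{n}\mathrm{scal}\,g\right\vert^2 + \mathrm{scal}\left(\lambda - \frac{1}{n}\mathrm{scal}\right).
\end{equation*}
Since the scalar curvature is constant, its $f$-Laplacian vanishes, so the left-hand side is zero. This converts the differential identity into the pointwise algebraic relation
\begin{equation*}
\left\vert \mathrm{Ric}-\frac{1}{n}\mathrm{scal}\,g\right\vert^2 = \mathrm{scal}\left(\lambda - \frac{1}{n}\mathrm{scal}\right) = \frac{1}{n}\,\mathrm{scal}\,(n\lambda - \mathrm{scal}).
\end{equation*}

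Next I would observe that the left-hand side is the squared norm of the trace-free part of the Ricci tensor, hence nonnegative, which forces $\mathrm{scal}\,(n\lambda - \mathrm{scal}) \geq 0$. The bounds then follow from a sign analysis of the two factors. When $\lambda > 0$, the two factors must have the same sign; the branch $\mathrm{scal}\leq 0$ together with $n\lambda - \mathrm{scal}\leq 0$ is impossible since it would demand $\mathrm{scal}\geq n\lambda > 0$, leaving only $0 \leq \mathrm{scal} \leq n\lambda$. Symmetrically, when $\lambda < 0$ the branch $\mathrm{scal}\geq 0$ is excluded and one is left with $n\lambda \leq \mathrm{scal} \leq 0$.

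Finally, for the extremal statement, I would note that the metric is Einstein precisely when the trace-free Ricci tensor vanishes, i.e. when the left-hand side of the algebraic relation is zero. By that relation this is equivalent to $\mathrm{scal}\,(n\lambda - \mathrm{scal}) = 0$, which in turn holds exactly when $\mathrm{scal} = 0$ or $\mathrm{scal} = n\lambda$ — the two endpoints of the interval found above. Thus attaining either extreme value is equivalent to being Einstein. There is no real obstacle here once the correct form of the identity from Lemma \ref{Sol2} is in hand; the only point requiring a moment's care is the elimination of the spurious sign branch in each case, which I would handle by explicitly contradicting the assumed sign of $\lambda$.
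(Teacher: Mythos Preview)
Your proposal is correct and follows essentially the same route as the paper: both set $\Delta_f\mathrm{scal}=0$ in the identity from Lemma~\ref{Sol2}, read off the nonnegativity of $\mathrm{scal}(\lambda-\tfrac{1}{n}\mathrm{scal})$ from the trace-free Ricci norm, and deduce the interval bounds and the Einstein conclusion at the endpoints. Your write-up is slightly more explicit about eliminating the spurious sign branch and about the equivalence at the extremes, but the argument is the same.
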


\begin{proof}
Again we have that%
\begin{equation*}
0=\frac{1}{2}\Delta _{f}\mathrm{scal}=-\left\vert \mathrm{Ric}-\frac{1}{n}%
\mathrm{scal}g\right\vert ^{2}+\mathrm{scal}\left( \lambda -\frac{1}{n}%
\mathrm{scal}\right)
\end{equation*}%
showing that%
\begin{equation*}
0\leq \left\vert \mathrm{Ric}-\frac{1}{n}\mathrm{scal}g\right\vert ^{2}=%
\mathrm{scal}\left( \lambda -\frac{1}{n}\mathrm{scal}\right)
\end{equation*}

Thus $\mathrm{scal}\in \left[ 0,n\lambda \right] $ if the soliton is
shrinking and the metric is Einstein if the scalar curvature takes on either
of the boundary values. A similar analysis holds in the expanding case.
\end{proof}

Before proving the main characterization we study the conditions that
gurantee radial flatness.

\begin{proposition}
The following conditions for a shrinking (expanding) gradient soliton 
\begin{equation*}
\mathrm{Ric}+\mathrm{Hess}f=\lambda g
\end{equation*}%
all imply that it is radially flat.

\begin{enumerate}
\item The scalar curvature is constant and $\mathrm{sec}\left( E,\nabla
f\right) \geq 0$ $\left( \mathrm{sec}\left( E,\nabla f\right) \leq 0.\right) 
$

\item The scalar curvature is constant and $0\leq \mathrm{Ric}\leq \lambda g$
$\left( \lambda g\leq \mathrm{Ric}\leq 0.\right) $

\item The curvature tensor is harmonic.
\end{enumerate}
\end{proposition}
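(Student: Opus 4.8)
The plan is to show in each case that the self-adjoint \emph{radial curvature operator} $\mathcal{R}(E)=R(E,\nabla f)\nabla f$ vanishes identically, which is exactly radial flatness. Two facts from Lemma \ref{Sol2} drive everything. First, $\nabla\mathrm{scal}=2\mathrm{Ric}(\nabla f)$, so constant scalar curvature forces $\mathrm{Ric}(\nabla f,\nabla f)=\tfrac12 D_{\nabla f}\mathrm{scal}=0$; note also that $\mathrm{tr}\,\mathcal{R}=\mathrm{Ric}(\nabla f,\nabla f)$. Second, the middle identity of that lemma, which under constant scalar curvature reads
\begin{equation*}
\mathcal{R}=\nabla_{\nabla f}\mathrm{Ric}+\mathrm{Ric}\circ(\lambda I-\mathrm{Ric}).
\end{equation*}

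For condition (1) I would not even need the middle identity. Constant scalar curvature gives $\mathrm{tr}\,\mathcal{R}=\mathrm{Ric}(\nabla f,\nabla f)=0$, while the hypothesis $\mathrm{sec}(E,\nabla f)\ge 0$ (resp. $\le 0$) says precisely that the self-adjoint operator $\mathcal{R}$ is positive (resp. negative) semidefinite. A semidefinite self-adjoint operator of vanishing trace is zero, so $\mathcal{R}=0$.

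For condition (3), harmonic curvature is equivalent to $d^{\nabla}\mathrm{Ric}=0$, i.e. $(\nabla_X\mathrm{Ric})Y=(\nabla_Y\mathrm{Ric})X$, and in particular implies constant scalar curvature. Applying this Codazzi symmetry with $X=\nabla f$ I compute
\begin{equation*}
(\nabla_{\nabla f}\mathrm{Ric})(E)=(\nabla_E\mathrm{Ric})(\nabla f)=\nabla_E\!\left(\mathrm{Ric}(\nabla f)\right)-\mathrm{Ric}(\nabla_E\nabla f)=-\mathrm{Ric}\circ(\lambda I-\mathrm{Ric})(E),
\end{equation*}
using $\mathrm{Ric}(\nabla f)=\tfrac12\nabla\mathrm{scal}=0$ and $\nabla_E\nabla f=(\lambda I-\mathrm{Ric})(E)$. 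Substituting into the middle identity cancels the two right-hand terms and yields $\mathcal{R}=0$.

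Condition (2) is where the real work lies, and I expect it to be the main obstacle. Taking the trace of the middle identity (equivalently, the last formula of Lemma \ref{Sol2}) and using constant scalar curvature gives $\mathrm{tr}\!\left(\mathrm{Ric}\circ(\lambda I-\mathrm{Ric})\right)=\mathrm{Ric}(\nabla f,\nabla f)=0$. Under $0\le\mathrm{Ric}\le\lambda g$ the eigenvalues $\mu_i$ of $\mathrm{Ric}$ satisfy $0\le\mu_i\le\lambda$, so each $\mu_i(\lambda-\mu_i)\ge 0$; a nonnegative sum that vanishes forces every term to vanish, whence $\mathrm{Ric}\circ(\lambda I-\mathrm{Ric})=0$ and each $\mu_i\in\{0,\lambda\}$. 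The middle identity then collapses to $\mathcal{R}=\nabla_{\nabla f}\mathrm{Ric}$, and the task reduces to showing this vanishes. Since $\lambda\neq 0$ the eigenvalues are locally constant, producing a smooth orthogonal splitting $TM=V_0\oplus V_\lambda$ into the kernel and the $\lambda$-eigenspace of $\mathrm{Ric}$, with $\nabla f\in V_0$; differentiating $\mathrm{Ric}^2=\lambda\,\mathrm{Ric}$ along $\nabla f$ shows that $\nabla_{\nabla f}\mathrm{Ric}$ is block-off-diagonal for this splitting, which already forces $\mathrm{sec}(E,\nabla f)=0$ for $E$ tangent to either distribution. The delicate point is the remaining mixed block: I expect to close it by proving the splitting is invariant under $\nabla_{\nabla f}$ (so that $\nabla_{\nabla f}\mathrm{Ric}=0$), using $\nabla_v\nabla f=(\lambda I-\mathrm{Ric})(v)=0$ for $v\in V_\lambda$ together with the soliton structure. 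This parallel-transport step, essentially where the local product picture enters, is the one I anticipate requiring the most care.
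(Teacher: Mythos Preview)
Your arguments for conditions (1) and (3) are correct and essentially match the paper's. For (1) the proof is identical. For (3) the paper takes a slightly shorter route: from the soliton equation one has the identity
\[
(\nabla_X\mathrm{Ric})(Y,Z)-(\nabla_Y\mathrm{Ric})(X,Z)=-g(R(X,Y)\nabla f,Z),
\]
and harmonic curvature makes the left side vanish (it equals $\mathrm{div}\,R(X,Y,Z)$ by the second Bianchi identity), yielding the stronger conclusion $R(X,Y)\nabla f=0$ for all $X,Y$. Your version, which routes the Codazzi symmetry through the middle identity of Lemma~\ref{Sol2}, is equally valid and reaches $\mathcal R=0$ just as well; it simply uses the paper's new formula more explicitly, at the cost of obtaining only the radial statement rather than $R(\cdot,\cdot)\nabla f=0$.

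For condition (2) you follow exactly the paper's path: the trace of the middle identity together with $0\le\mathrm{Ric}\le\lambda g$ forces $\mathrm{Ric}\circ(\lambda I-\mathrm{Ric})=0$, hence the eigenvalues of $\mathrm{Ric}$ (equivalently of $S=\nabla\nabla f$) lie in $\{0,\lambda\}$, and the identity collapses to $\mathcal R=\nabla_{\nabla f}\mathrm{Ric}=-\nabla_{\nabla f}S$. At that point the paper computes $g\!\left((\nabla_{\nabla f}S)(E),E\right)=0$ for each eigenfield $E$—precisely your observation that $\nabla_{\nabla f}\mathrm{Ric}$ is block off-diagonal—and then simply asserts radial flatness. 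In other words, the paper's own argument stops at exactly the place you flag as ``the delicate point'' and does not treat the mixed block separately. So you have not overlooked anything in the paper; you are being more scrupulous than the source. Your proposed closure (showing the eigenspace distributions are invariant under parallel transport along $\nabla f$) is the natural approach, but the paper does not supply it, and you would have to work out those details independently if you want a self-contained argument.
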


\begin{proof}
1: Use the equations%
\begin{eqnarray*}
0 &=&\frac{1}{2}\nabla _{\nabla f}\mathrm{scal}=\mathrm{Ric}\left( \nabla
f,\nabla f\right) \\
&=&\sum g\left( R\left( E_{i},\nabla f\right) \nabla f,E_{i}\right)
\end{eqnarray*}%
to see that $g\left( R\left( E_{i},\nabla f\right) \nabla f,E_{i}\right) =0$
if the radial curvatures are always nonnegative (nonpositive).

2: First observe that 
\begin{equation*}
0=\frac{1}{2}\Delta _{f}\mathrm{scal}=\mathrm{tr}\left( \mathrm{Ric}\circ
\left( \lambda I-\mathrm{Ric}\right) \right)
\end{equation*}%
The assumptions on the Ricci curvature imply that $\mathrm{Ric}\circ \left(
\lambda I-\mathrm{Ric}\right) $ is a nonnegative operator. Thus%
\begin{equation*}
\mathrm{Ric}\circ \left( \lambda I-\mathrm{Ric}\right) =0.
\end{equation*}%
This shows that the only possible eigenvalues for $\mathrm{Ric}$ and $\nabla
\nabla f$ are $0$ and $\lambda .$

To establish radial flatness we then use that the formula 
\begin{equation*}
\nabla _{\nabla f}\mathrm{Ric}+\mathrm{Ric}\circ \left( \lambda I-\mathrm{Ric%
}\right) =R\left( \cdot ,\nabla f\right) \nabla f+\frac{1}{2}\nabla _{\cdot
}\nabla \mathrm{scal}
\end{equation*}%
is reduced to%
\begin{eqnarray*}
R\left( \cdot ,\nabla f\right) \nabla f &=&\nabla _{\nabla f}\mathrm{Ric} \\
&=&-\nabla _{\nabla f,\cdot }^{2}\nabla f
\end{eqnarray*}%
Next pick a field $E$ such that $\nabla _{E}\nabla f=0,$ then%
\begin{eqnarray*}
g\left( \nabla _{\nabla f,E}^{2}\nabla f,E\right) &=&g\left( \nabla _{\nabla
f}\nabla _{E}\nabla f,E\right) -g\left( \nabla _{\nabla _{\nabla f}E}\nabla
f,E\right) \\
&=&-g\left( \nabla _{E}\nabla f,\nabla _{\nabla f}E\right) \\
&=&0
\end{eqnarray*}%
and finally when $\nabla _{E}\nabla f=\lambda E$%
\begin{eqnarray*}
g\left( \nabla _{\nabla f,E}^{2}\nabla f,E\right) &=&g\left( \nabla _{\nabla
f}\nabla _{E}\nabla f,E\right) -g\left( \nabla _{\nabla _{\nabla f}E}\nabla
f,E\right) \\
&=&\lambda g\left( \nabla _{\nabla f}E,E\right) -g\left( \nabla _{E}\nabla
f,\nabla _{\nabla f}E\right) \\
&=&\lambda g\left( \nabla _{\nabla f}E,E\right) -\lambda g\left( E,\nabla
_{\nabla f}E\right) \\
&=&0.
\end{eqnarray*}%
Thus $g\left( R\left( E,\nabla f\right) \nabla f,E\right) =0$ for all
eigenfields. This shows that the metric is radially flat.

3: Finally use the soliton equation to see that%
\begin{equation*}
\left( \nabla _{X}\mathrm{Ric}\right) \left( Y,Z\right) -\left( \nabla _{Y}%
\mathrm{Ric}\right) \left( X,Z\right) =-g\left( R\left( X,Y\right) \nabla
f,Z\right) .
\end{equation*}%
>From the 2nd Bianchi identity we also get that%
\begin{equation*}
\left( \nabla _{X}\mathrm{Ric}\right) \left( Y,Z\right) -\left( \nabla _{Y}%
\mathrm{Ric}\right) \left( X,Z\right) =\mathrm{div}R\left( X,Y,Z\right) =0
\end{equation*}%
since the curvature is harmonic. Thus $R\left( X,Y\right) \nabla f=0.$ In
particular $\mathrm{sec}\left( E,\nabla f\right) =0.$
\end{proof}

We now turn our attention to the main theorem. To prepare the way we show.

\begin{proposition}
Assume that we have a gradient soliton%
\begin{equation*}
\mathrm{Ric}+\mathrm{Hess}f=\lambda g
\end{equation*}%
with constant scalar curvature, $\lambda \neq 0$ and a nontrivial $f$. For a
suitable constant $\alpha $%
\begin{equation*}
f+\alpha =\frac{\lambda }{2}r^{2}
\end{equation*}%
where $r$ is a smooth function whenever $\nabla f\neq 0$ and satisfies%
\begin{equation*}
\left\vert \nabla r\right\vert =1.
\end{equation*}
\end{proposition}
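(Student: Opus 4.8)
The plan is to show that $\left\vert \nabla f\right\vert ^{2}$ is an affine function of $f$, namely $\left\vert \nabla f\right\vert ^{2}=2\lambda \left( f+\alpha \right) $ for a suitable constant $\alpha $, and then simply read off $r$ from this identity. The hypotheses of constant scalar curvature and $\lambda \neq 0$ are exactly what make this work.

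First I would exploit constant scalar curvature. The first formula of Lemma \ref{Sol2} gives $\nabla \mathrm{scal}=2\mathrm{Ric}\left( \nabla f\right) $, so constancy of $\mathrm{scal}$ forces $\mathrm{Ric}\left( \nabla f\right) =0$. Next I would compute the gradient of $\left\vert \nabla f\right\vert ^{2}$. Since $S=\nabla \nabla f$ is self-adjoint, for any $X$ we have $X\left\vert \nabla f\right\vert ^{2}=2g\left( \nabla _{X}\nabla f,\nabla f\right) =2g\left( X,S\left( \nabla f\right) \right) $, so that $\frac{1}{2}\nabla \left\vert \nabla f\right\vert ^{2}=S\left( \nabla f\right) $. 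The soliton equation in the form $S=\lambda I-\mathrm{Ric}$ then yields
\begin{equation*}
\tfrac{1}{2}\nabla \left\vert \nabla f\right\vert ^{2}=S\left( \nabla f\right) =\lambda \nabla f-\mathrm{Ric}\left( \nabla f\right) =\lambda \nabla f .
\end{equation*}
Consequently $\nabla \left( \left\vert \nabla f\right\vert ^{2}-2\lambda f\right) =0$, so on each component $\left\vert \nabla f\right\vert ^{2}-2\lambda f$ is a constant, and I may choose $\alpha $ with $\left\vert \nabla f\right\vert ^{2}=2\lambda \left( f+\alpha \right) $.

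Finally I would define $r=\sqrt{2\left( f+\alpha \right) /\lambda }$, so that by construction $f+\alpha =\frac{\lambda }{2}r^{2}$. The point of writing it this way is that $r^{2}=2\left( f+\alpha \right) /\lambda =\left\vert \nabla f\right\vert ^{2}/\lambda ^{2}\geq 0$, so the quantity under the root is nonnegative regardless of the sign of $\lambda $, and strictly positive precisely where $\nabla f\neq 0$. Thus $r$ is smooth on the open set $\left\{ \nabla f\neq 0\right\} $. To verify $\left\vert \nabla r\right\vert =1$ there, differentiate $f+\alpha =\frac{\lambda }{2}r^{2}$ to get $\nabla f=\lambda r\nabla r$, hence $\left\vert \nabla f\right\vert ^{2}=\lambda ^{2}r^{2}\left\vert \nabla r\right\vert ^{2}$; comparing with $\left\vert \nabla f\right\vert ^{2}=\lambda ^{2}r^{2}$ gives $\left\vert \nabla r\right\vert ^{2}=1$ wherever $r\neq 0$.

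The computations here are all routine once the right combination is spotted, so the only genuine point requiring care is the last one: ensuring that $r$ is real-valued and smooth. This is where the sign bookkeeping matters, and where the assumption $\lambda \neq 0$ is essential — it is used both to divide by $\lambda $ in defining $r$ and to guarantee that $f+\alpha =\left\vert \nabla f\right\vert ^{2}/\left( 2\lambda \right) $ shares the sign of $\lambda $, so that $r^{2}\geq 0$ and the critical set $\left\{ \nabla f=0\right\} $ is exactly the zero set of $r$, which the statement legitimately excludes.
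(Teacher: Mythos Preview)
Your proof is correct and follows essentially the same route as the paper: both establish $|\nabla f|^{2}=2\lambda (f+\alpha)$ and then define $r$ so that $f+\alpha=\tfrac{\lambda}{2}r^{2}$, verifying $|\nabla r|=1$ by the same chain of equalities. The only cosmetic difference is that the paper first records the general identity $\mathrm{scal}+|\nabla f|^{2}-2\lambda f=\mathrm{const}$ and then invokes constancy of $\mathrm{scal}$, whereas you use constant scalar curvature up front to kill $\mathrm{Ric}(\nabla f)$ before computing $\nabla|\nabla f|^{2}$; the content is the same.
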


\begin{proof}
Observe that%
\begin{eqnarray*}
\frac{1}{2}\nabla \left( \mathrm{scal}+\left\vert \nabla f\right\vert
^{2}\right) &=&\mathrm{Ric}\left( \nabla f\right) +\nabla _{\nabla f}\nabla f
\\
&=&\lambda \nabla f
\end{eqnarray*}%
which shows%
\begin{equation*}
\mathrm{scal}+\left\vert \nabla f\right\vert ^{2}-2\lambda f=\mathrm{const}
\end{equation*}%
By adding a suitable constant to $f$ we can then assume that%
\begin{equation*}
\left\vert \nabla f\right\vert ^{2}=2\lambda f.
\end{equation*}%
Thus $f$ has the same sign as $\lambda $ and the same zero locus as its
gradient. If we define $r$ such that%
\begin{equation*}
f=\frac{\lambda }{2}r^{2}
\end{equation*}%
then%
\begin{equation*}
\nabla f=\lambda r\nabla r
\end{equation*}%
and%
\begin{eqnarray*}
2\lambda f &=&\left\vert \nabla f\right\vert ^{2} \\
&=&\lambda ^{2}r^{2}\left\vert \nabla r\right\vert ^{2} \\
&=&2\lambda f\left\vert \nabla r\right\vert ^{2}
\end{eqnarray*}
\end{proof}

This allows us to establish our characterization of rigid gradient solitons.

\begin{theorem}
A gradient soliton 
\begin{equation*}
\mathrm{Ric}+\mathrm{Hess}f=\lambda g
\end{equation*}%
is rigid if it is radially flat and has constant scalar curvature.
\end{theorem}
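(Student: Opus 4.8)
The plan is to reduce to the generic situation and then to show that the Ricci tensor has only the eigenvalues $0$ and $\lambda$, each constant, which forces the splitting. First I would dispose of the degenerate cases: if $f$ is constant the equation reads $\mathrm{Ric}=\lambda g$, so $M$ is Einstein and rigid with no flat factor, and if $\lambda=0$ the steady proposition already gives a Ricci flat product while radial flatness makes the Euclidean factor flat. So assume $\lambda\neq0$ and $f$ nonconstant. By the preceding proposition I may normalize $f=\frac{\lambda}{2}r^{2}$ with $|\nabla r|=1$ on the regular set, so that $r$ is a distance function and $\nabla f=\lambda r\nabla r$. Constant scalar curvature together with $\nabla\mathrm{scal}=2\mathrm{Ric}(\nabla f)$ from Lemma \ref{Sol2} gives $\mathrm{Ric}(\nabla r)=0$, so $\nabla r$ lies in the kernel of $\mathrm{Ric}$ and $\mathrm{Ric}$ preserves the tangent spaces of the level sets $\Sigma_{r}$. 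Feeding $\mathrm{Hess}f=\lambda(dr\otimes dr+r\,\mathrm{Hess}\,r)$ into $\mathrm{Hess}f=\lambda g-\mathrm{Ric}$ and restricting to $\Sigma_{r}$ identifies the shape operator of the level sets as $\mathrm{Hess}\,r=\frac{1}{r}\bigl(\Pi-\frac{1}{\lambda}\mathrm{Ric}\bigr)$, where $\Pi$ projects onto $T\Sigma_{r}$.

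Next I would extract an ordinary differential equation for $\mathrm{Ric}$ along the radial geodesics. Since the scalar curvature is constant and the soliton is radially flat, the right hand side of the middle identity in Lemma \ref{Sol2} vanishes, giving $\nabla_{\nabla f}\mathrm{Ric}=\mathrm{Ric}\circ(\mathrm{Ric}-\lambda I)$, that is $\nabla_{\partial_{r}}\mathrm{Ric}=\frac{1}{\lambda r}(\mathrm{Ric}^{2}-\lambda\mathrm{Ric})$ on the level-set directions. The right hand side is a polynomial in $\mathrm{Ric}$, so $\mathrm{Ric}$ commutes with its radial derivative; its spectral decomposition is therefore parallel along each radial geodesic and every eigenvalue $\rho$ solves the scalar Riccati equation $\rho'=\frac{\rho(\rho-\lambda)}{\lambda r}$. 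The solutions are exactly the constants $\rho\equiv0$, $\rho\equiv\lambda$, and the drifting family $\rho=\frac{\lambda}{1-Ar}$ with $A\neq0$.

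The crux is to eliminate the drifting solutions, and this is where constant scalar curvature is used decisively. Writing $\mathrm{scal}=\sum_{i}\rho_{i}(r)$ over the level-set eigenvalues (the radial one being $0$), the sum must be independent of $r$; but a nontrivial combination $\sum\frac{\lambda}{1-A_{i}r}$ is a rational function with genuine poles at the distinct points $r=1/A_{i}$ and cannot be constant unless every $A_{i}=0$. Hence $\mathrm{Ric}$ has constant eigenvalues $0$ and $\lambda$, with multiplicities $k$ and $m=\mathrm{scal}/\lambda$, and $\mathrm{Hess}\,r$ reduces to $\frac{1}{r}$ on the fibre directions tangent to $\Sigma_{r}$ and to $0$ on the base directions. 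Consequently $\mathrm{Ric}\circ(\lambda I-\mathrm{Ric})=0$ and $\mathrm{Hess}f=\lambda\,\Pi_{\mathcal F}$, where $\mathcal F=\ker\mathrm{Ric}$ (which contains $\nabla f$) and $\mathcal B=\ker(\mathrm{Ric}-\lambda I)$. I would then check that these eigendistributions are parallel, so that on the universal cover the de Rham theorem splits the metric as a product of an integral leaf of $\mathcal B$ and one of $\mathcal F$: the $\mathcal B$-leaves are Einstein with constant $\lambda$ (the base $N$), while on each $\mathcal F$-leaf $f$ has Hessian $\lambda g$, so by the Gaussian rigidity already used in Proposition \ref{Ein-Sol} the fibres are flat $\mathbb{R}^{k}$ with $f=\frac{\lambda}{2}|x|^{2}$. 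Finally the deck group preserves $f$, hence fixes the zero section and acts orthogonally on the flat fibres, producing the bundle $N\times_{\Gamma}\mathbb{R}^{k}$.

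I expect the two delicate points to be, first, the pole argument above, whose force rests on the fact that the eigenvalues really are governed by the scalar Riccati equation, which in turn requires the parallelism of the spectral decomposition along radial geodesics; and second, the passage from ``constant eigenvalues $0,\lambda$'' to the genuine global product $N\times_{\Gamma}\mathbb{R}^{k}$, where one must verify that $\mathcal B$ and $\mathcal F$ are parallel (not merely integrable) and use completeness so that the fibres are complete Euclidean spaces and the base closes up to an Einstein manifold.
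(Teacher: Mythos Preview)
Your overall strategy is sound and shares the same starting point as the paper—the identity $\nabla_{\nabla f}\mathrm{Ric}+\mathrm{Ric}\circ(\lambda I-\mathrm{Ric})=R(\cdot,\nabla f)\nabla f+\tfrac12\nabla_{\cdot}\nabla\mathrm{scal}$ from Lemma~\ref{Sol2}, which under the hypotheses collapses to a Riccati-type ODE for the Ricci eigenvalues along radial geodesics. Where you diverge is in how you force the eigenvalues into $\{0,\lambda\}$ and in how you reach the product structure. The paper never solves the ODE explicitly; it applies the blow-up alternative only to the \emph{smallest} eigenvalue of $S=\mathrm{Hess}f$ to conclude that $f$ is convex, then works with the critical set $N=\{f=0\}$: there $S\circ(S-\lambda I)=0$ automatically, so $N$ is a totally geodesic Einstein submanifold, the normal exponential map is a diffeomorphism, and radial flatness makes every Jacobi field along a normal geodesic linear ($J=E+tF$ with $E,F$ parallel), which yields the flat bundle $N\times_{\Gamma}\mathbb{R}^{k}$ directly. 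Your route—integrating the scalar Riccati equation and using the constancy of $\mathrm{scal}=\sum\rho_i$ to kill the drifting branch $\rho=\lambda/(1-Ar)$ by a rational-function argument—is a legitimate alternative that actually gives more: it produces $\mathrm{Ric}\circ(\lambda I-\mathrm{Ric})=0$ everywhere, not just on $N$.

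The one step that is genuinely incomplete in your outline is the passage from ``eigenvalues in $\{0,\lambda\}$'' to a de Rham splitting. You propose to check that $\mathcal F=\ker\mathrm{Ric}$ and $\mathcal B=\ker(\mathrm{Ric}-\lambda I)$ are parallel, but the hypotheses give you directly only $\nabla_{\nabla f}\mathrm{Ric}=0$ (radial parallelism) together with the Codazzi-type identity $(\nabla_X\mathrm{Ric})(Y,Z)-(\nabla_Y\mathrm{Ric})(X,Z)=-g(R(X,Y)\nabla f,Z)$; upgrading these to $\nabla\mathrm{Ric}=0$ is not obvious and essentially amounts to redoing the Jacobi-field analysis. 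The paper sidesteps this entirely: once the normal exponential map is a diffeomorphism and the radial Jacobi fields are linear, the flat-bundle structure is read off without ever invoking de Rham. You can finish your argument cleanly by switching to that viewpoint at the last stage—your eigenvalue conclusion already shows $f$ is convex with totally geodesic Einstein critical set, and then radial flatness controls the Jacobi fields normal to $N$.
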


\begin{proof}
We consider the case where $\lambda >0$ as the other case is similar aside
from some sign changes.

Using the condensed version of the soliton equation%
\begin{eqnarray*}
\mathrm{Ric}+S &=&\lambda I, \\
S &=&\nabla \nabla f
\end{eqnarray*}%
we have%
\begin{eqnarray*}
\nabla _{\nabla f}S+S\circ \left( S-\lambda I\right) &=&0, \\
\nabla _{\nabla f}\mathrm{Ric}+\mathrm{Ric}\circ \left( \lambda I-\mathrm{Ric%
}\right) &=&0
\end{eqnarray*}

Assume that $f=\frac{\lambda }{2}r^{2}$ where $r$ is a nonnegative distance
function. The minimum set for $f$%
\begin{equation*}
N=\left\{ x:f\left( x\right) =0\right\}
\end{equation*}%
is also characterized as%
\begin{equation*}
N=\left\{ x\in M:\nabla f\left( x\right) =0\right\}
\end{equation*}

This shows that $S\circ \left( S-\lambda I\right) =0$ on $N.$

When $r>0$ we note that the smallest eigevalue for $S$ is always absolutely
continuous and therefore satisfies the differential equation%
\begin{equation*}
D_{\nabla f}\mu _{\min }=\mu _{\min }\left( \lambda -\mu _{\min }\right) .
\end{equation*}%
We claim that $\mu _{\min }\geq 0.$ Using $r>0$ as an independent coordinate
and $\nabla f=\lambda r\nabla r$ yields%
\begin{equation*}
\partial _{r}\mu _{\min }=\frac{1}{\lambda r}\mu _{\min }\left( \lambda -\mu
_{\min }\right)
\end{equation*}%
This equation can be solved by separation of variables. In particular, $\mu
_{\min }\rightarrow -\infty $ in finite time provided $\mu _{\min }<0$
somewhere. This contradicts smoothness of $f.$ Thus we can conclude that $%
\mu _{\min }\geq 0$ and hence that $f$ is convex.

Now that we know $f$ is convex the minimum set $N$ must be totally convex.
We also know that on $N$ the eigenvalues of $\nabla \nabla f$ can only be $0$
and $\lambda .$ Thus their multiplicities are constant. Using that the rank
of $\nabla \nabla f$ is constant we see that $N$ is a submanifold whose
tangent space is given by $\ker \left( \nabla \nabla f\right) .$ This in
turn shows that $N$ is a totally geodesic submanifold.

Note that when $\lambda >0$ the minimum set $N$ is in fact compact as it
must be an Einstein manifold with Einstein constant $\lambda .$

The normal exponential map%
\begin{equation*}
\exp :v\left( N\right) \rightarrow M
\end{equation*}
follows the integral curves for $\nabla f$ or $\nabla r$ and is therefore a
diffeomorphism.

Using the fundamental equations (see \cite{Petersen}) we see that the metric
is completely determined by the fact that it is radially flat and that $N$
is totally geodesic. From this it follows that the bundle is flat and hence
of the type $N\times _{\Gamma }\mathbb{R}^{k}.$

Alternately note that radial flatness shows that all Jacobi fields along
geodesics tangent to $\nabla f$ must be of the form 
\begin{equation*}
J=E+tF
\end{equation*}%
where $E$ and $F$ are parallel. This also yields the desired vector bundle
structure.
\end{proof}

\section{Other Results}

In this section we discuss some further applications of the formulas derived
above. First we recall some technical tools. We will use the following
notation. 
\begin{equation*}
\Delta _{X}=\Delta -D_{X}
\end{equation*}

Recall the maximum principle for elliptic PDE's.

\begin{theorem}[Maximum Principle]
If $u$ is a real valued function with $\Delta_X(u) \geq 0$ then $u$ is
constant in a neighborhood of any local maximum.
\end{theorem}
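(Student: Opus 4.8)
The plan is to recognize $\Delta_X=\Delta-D_X$ as a uniformly elliptic second order operator with no zeroth order term and invoke E.~Hopf's strong maximum principle. First I would work in a relatively compact coordinate chart around the local maximum $p$. There the operator takes the form
\[
\Delta_X u=g^{ij}\partial_i\partial_j u+b^k\partial_k u,
\]
where $b^k=-g^{ij}\Gamma_{ij}^k-X^k$ collects all the first order terms. The principal part $g^{ij}$ is smooth and positive definite, hence uniformly elliptic on a small enough ball, and crucially there is no zeroth order coefficient. So the hypothesis $\Delta_X u\geq 0$ exhibits $u$ as a subsolution of a benign linear elliptic operator $L$ with smooth coefficients.

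Next I would establish the strong maximum principle for $L$: a $C^2$ subsolution attaining an interior maximum value $M$ is constant near that point. The engine is Hopf's boundary point lemma. Assuming $u(p)=M$ but $u\not\equiv M$ nearby, the open set $\{u<M\}$ accumulates at some $q$ on the level set $\{u=M\}$ that admits an interior tangent ball $B\subset\{u<M\}$. On an annulus around $B$ one builds an exponential barrier $v=e^{-\alpha\rho^2}-e^{-\alpha\rho_0^2}$, with $\rho$ the Euclidean distance to the center of $B$, and compares $u+\varepsilon v$ against $M$. This forces a strictly positive outward normal derivative $\partial_\nu u(q)>0$, which contradicts $\nabla u(q)=0$ at the interior maximum point $q$. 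A short topological argument then upgrades this to the stated conclusion: the set $\{u=M\}$ is closed by continuity and open by the tangent-ball/Hopf argument, so it fills a whole neighborhood of $p$ by connectedness.

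The step I expect to be the main obstacle is the barrier computation, namely verifying $Lv>0$ on the annulus. One must track how the first order drift $b^k\partial_k v$ competes with the sign indefinite second order term $g^{ij}\partial_i\partial_j v$, and choose $\alpha$ large so that the contribution quadratic in $\alpha$ coming from $\partial_i\partial_j v$ dominates. This is where uniform ellipticity of $g^{ij}$ and boundedness of $b^k$ on the compact annulus are essential. Everything else is bookkeeping, and since $\Delta_X$ is exactly of the form to which the classical theory applies, the result could alternatively be quoted directly from a standard reference on linear elliptic equations.
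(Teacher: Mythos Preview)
Your sketch is correct: writing $\Delta_X=\Delta-D_X$ in local coordinates gives a uniformly elliptic operator with no zeroth order term, and Hopf's strong maximum principle applies verbatim. The paper, however, does not prove this statement at all; it simply \emph{recalls} it as the classical strong maximum principle for elliptic operators and uses it as a tool in the subsequent results. Your own final remark---that one could just quote a standard reference---is exactly what the authors do.
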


The first lemma follows from Lemma \ref{Sol1}.

\begin{lemma}
If $M$ is a complete expanding or steady Ricci soliton then 
\begin{equation*}
\Delta_X|X|^2 \geq 0.
\end{equation*}
Moreover, $\Delta_X|X|^2 = 0$ if and only if $M$ is Einstein.
\end{lemma}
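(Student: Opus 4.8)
The plan is to read both assertions off the scalar identity already proved in Lemma~\ref{Sol1}. With the abbreviation $\Delta_X = \Delta - D_X$ that identity is
\begin{equation*}
\tfrac{1}{2}\Delta_X|X|^2 = |\nabla X|^2 - \lambda|X|^2 .
\end{equation*}
The inequality is then immediate: for a steady or expanding soliton $\lambda \le 0$, so $-\lambda|X|^2 \ge 0$, and together with $|\nabla X|^2 \ge 0$ this makes the right-hand side a sum of nonnegative terms, giving $\Delta_X|X|^2 \ge 0$. For one direction of the equality statement I would use that, precisely because $\lambda \le 0$, the two summands are \emph{separately} nonnegative; hence $\Delta_X|X|^2 = 0$ forces $\nabla X = 0$ everywhere and, in the expanding case $\lambda < 0$, also $X \equiv 0$. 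Either way $L_X g = 0$, so the soliton equation collapses to $\mathrm{Ric} = \lambda g$ and $M$ is Einstein.

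For the converse I would run the identity backwards: by Lemma~\ref{Sol1} the conclusion $\Delta_X|X|^2 = 0$ is equivalent to the pointwise relation $|\nabla X|^2 = \lambda|X|^2$, and when $\lambda \le 0$ this is in turn equivalent to $\nabla X = 0$ together with $\lambda|X|^2 = 0$. So the converse reduces to showing that an Einstein soliton carries a parallel soliton field in the steady case and a vanishing one in the expanding case.

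The step I expect to be the genuine obstacle is exactly this last reduction, because the soliton field is determined only up to a Killing field: replacing $X$ by $X + K$ leaves the equation $\mathrm{Ric} + \tfrac12 L_X g = \lambda g$ unchanged while altering $|X|^2$ and hence $\Delta_X|X|^2$. Concretely, flat $\mathbb{R}^n$ with a rotational Killing field (steady) and the flat Gaussian with $X = \lambda r\,\partial_r$ (expanding) are both Einstein yet have $\Delta_X|X|^2 \not\equiv 0$, so the converse cannot hold for an arbitrary representative of $X$. My plan for closing the gap is therefore to invoke completeness together with the Einstein condition to single out the canonical field: ruling out a proper homothetic part via the classification of complete homothetic vector fields and absorbing the Killing part, so that one may arrange $\nabla X = 0$ when $\lambda = 0$ and $X \equiv 0$ when $\lambda < 0$. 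Granting that, the relation $|\nabla X|^2 = \lambda|X|^2$ holds and $\Delta_X|X|^2 = 0$ follows; everything else is the one-line substitution into Lemma~\ref{Sol1}.
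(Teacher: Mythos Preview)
For the inequality and for the implication $\Delta_X|X|^2 = 0 \Rightarrow M$ Einstein, your argument is identical to the paper's: both are read straight off the identity $\tfrac12\Delta_X|X|^2 = |\nabla X|^2 - \lambda|X|^2$ from Lemma~\ref{Sol1}, and the paper's entire proof is that single citation.

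You go beyond the paper in scrutinizing the converse, and rightly so: the paper's one-line proof does not address it, and your counterexamples (a rotational Killing field on flat $\mathbb{R}^n$ with $\lambda = 0$; the flat Gaussian with $\lambda < 0$) show that the implication ``Einstein $\Rightarrow \Delta_X|X|^2 = 0$'' genuinely fails for some legitimate choices of the soliton field. Your proposed remedy, however, does not rescue the statement as written. Since $X$ is part of the given data $(M,g,X)$, you are not free to ``absorb the Killing part'' or otherwise trade $X$ for a more convenient representative; doing so changes the soliton under discussion, and the quantity $\Delta_X|X|^2$ along with it. What the formula actually yields, for $\lambda\le 0$, is that $\Delta_X|X|^2=0$ is equivalent to $\nabla X=0$ (together with $X=0$ when $\lambda<0$), which is strictly stronger than $g$ being Einstein. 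In the paper the lemma is only ever applied in the forward direction---to feed the maximum principle and conclude that certain solitons are Einstein---so the imprecision in the ``if and only if'' does no downstream damage; but you have correctly identified that the converse, taken literally, is not true.
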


\begin{proof}
This follows directly from the formula $\frac{1}{2} \Delta_X|X|^2 = |\nabla
X|^2 - \lambda |X|^2$.
\end{proof}

Applying the maximum principle then shows that $|X|$ can not achieve its
maximum without being trivial.

\begin{theorem}
If $M$ is a complete expanding or steady Ricci soliton and $|X|$ achieves
its maximum then $M$ is Einstein.
\end{theorem}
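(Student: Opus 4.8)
The plan is to apply the strong maximum principle to the function $u=|X|^2$ and then invoke the equality case of the preceding lemma to extract the Einstein condition. First I would observe that since $|X|$ attains its maximum at some point $p\in M$ and $t\mapsto t^2$ is increasing on $[0,\infty)$, the function $u=|X|^2$ also attains its global (and hence local) maximum at $p$. By the lemma immediately above, completeness of $M$ together with $\lambda\le 0$ (expanding or steady) gives $\Delta_X u\ge 0$ on all of $M$, so $u$ is a subsolution to which the Maximum Principle applies. The theorem then tells us that $u$ is constant in a neighborhood of $p$.

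The next step is to upgrade this local constancy to global constancy by the standard open--closed argument. Let $A=\{x\in M:u(x)=\max_M u\}$. This set is nonempty by hypothesis and closed by continuity of $u$. It is also open: every point of $A$ is a local maximum of $u$, so by the Maximum Principle $u$ is constant, hence equal to $\max_M u$, on a whole neighborhood of that point. Since $M$ is connected, $A=M$, and therefore $u\equiv \max_M u$ is constant on $M$.

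Finally, a constant $u$ satisfies $\Delta u=0$ and $D_X u=0$, so $\Delta_X u=0$ identically on $M$. The equality clause of the lemma above ($\Delta_X|X|^2=0$ if and only if $M$ is Einstein) then forces $M$ to be Einstein, as desired. I do not expect a genuine obstacle in this proof; it is a routine strong-maximum-principle argument. The only points that warrant a moment's care are the observation that $|X|$ and $|X|^2$ share their maximizers, and the use of connectedness of $M$ to propagate the locally established constancy to the whole manifold.
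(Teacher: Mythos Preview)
Your proof is correct and follows precisely the approach the paper intends: the paper itself gives no formal proof beyond the one-line remark ``Applying the maximum principle then shows that $|X|$ can not achieve its maximum without being trivial,'' and you have simply fleshed out that sentence with the standard open--closed connectedness argument and the invocation of the equality case of the preceding lemma.
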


Note that this clearly implies the following result for compact steady and
expanding solitons mentioned in the introduction.

\begin{corollary}
\label{Cmpct} Compact expanding or steady Ricci solitons are Einstein.
\end{corollary}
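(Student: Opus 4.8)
The plan is to read off this corollary directly from the preceding theorem. A compact Riemannian manifold is automatically complete, so the completeness hypothesis is satisfied for free. Moreover, $\left\vert X\right\vert ^{2}$ is a continuous (indeed smooth) function on the compact manifold $M$, so it attains its maximum at some point $p\in M$; equivalently $\left\vert X\right\vert $ achieves its maximum. The theorem then applies verbatim and forces $M$ to be Einstein. That is the entire argument at the level of the corollary.

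It is worth tracing briefly why the attainment of the maximum forces the Einstein condition, since this is where the genuine content sits. By the lemma immediately above we have $\Delta _{X}\left\vert X\right\vert ^{2}\geq 0$, so $\frac{1}{2}\left\vert X\right\vert ^{2}$ is $\Delta _{X}$-subharmonic. The maximum principle stated above then shows that $\left\vert X\right\vert ^{2}$ is constant in a neighborhood of its maximum $p$; a connectedness argument (the maximal set is open by the local statement and closed by continuity) propagates this to all of $M$, so $\left\vert X\right\vert ^{2}$ is globally constant and hence $\Delta _{X}\left\vert X\right\vert ^{2}\equiv 0$. The equality clause of the lemma (which traces back to $\left\vert \nabla X\right\vert ^{2}=\lambda \left\vert X\right\vert ^{2}$ coming from Lemma \ref{Sol1}, forcing $\nabla X=0$ in the steady and expanding cases) then yields that $M$ is Einstein.

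There is essentially no obstacle at the level of the corollary itself: all of the analytic work has already been carried out in Lemma \ref{Sol1}, the subharmonicity lemma, the maximum principle, and the preceding theorem. The only things to verify are the two elementary facts that compactness supplies both completeness and the attainment of the maximum of $\left\vert X\right\vert $, after which the theorem closes the argument immediately.
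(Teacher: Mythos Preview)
Your argument is correct and is exactly the approach the paper takes: the corollary is simply noted as an immediate consequence of the preceding theorem, since on a compact manifold $\left\vert X\right\vert$ attains its maximum. Your added paragraph tracing through the maximum principle and the equality case of the lemma is a faithful unpacking of why that theorem holds, but at the level of the corollary itself nothing more is needed.
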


When we have a gradient soliton we use the notation $\Delta _{X}=\Delta _{f}$%
. From Lemma \ref{Sol2} we also have the following inequality

\begin{lemma}
If $M$ is a steady gradient soliton or an expanding gradient soliton with
nonnegative scalar curvature, then 
\begin{equation*}
\Delta _{f}(\mathrm{scal})\leq 0.
\end{equation*}%
Moreover, $\Delta _{f}(\mathrm{scal})=0$ if and only if $M$ is Ricci flat.
In particular, the only expanding gradient soliton with nonnegative scalar
curvature and $\Delta _{f}(\mathrm{scal})=0$ is the Gaussian.
\end{lemma}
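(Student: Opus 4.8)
The plan is to read everything off the trace formula established in Lemma \ref{Sol2}, which expresses the drift Laplacian of the scalar curvature entirely in terms of the Ricci tensor,
\begin{equation*}
\frac{1}{2}\Delta_f\,\mathrm{scal} = \mathrm{tr}\left(\mathrm{Ric}\circ\left(\lambda I-\mathrm{Ric}\right)\right) = -\left\vert\mathrm{Ric}\right\vert^2 + \lambda\,\mathrm{scal}.
\end{equation*}
Since the substantive analytic work is already contained in this identity, the remaining argument is purely a matter of determining the sign of the right-hand side under the two hypotheses and analyzing when it vanishes.

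First I would dispose of the steady case $\lambda=0$. Here the formula collapses to $\frac{1}{2}\Delta_f\,\mathrm{scal}=-\left\vert\mathrm{Ric}\right\vert^2$, which is manifestly nonpositive and vanishes exactly when $\mathrm{Ric}=0$. This yields both the inequality and the equality characterization for steady solitons with no further work.

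Next I would treat the expanding case $\lambda<0$ under the standing hypothesis $\mathrm{scal}\geq 0$. Now the right-hand side is a sum of two separately nonpositive terms, namely $-\left\vert\mathrm{Ric}\right\vert^2\leq 0$ and $\lambda\,\mathrm{scal}\leq 0$ (the latter because $\lambda<0$ and $\mathrm{scal}\geq 0$). Hence $\Delta_f\,\mathrm{scal}\leq 0$, and if the expression vanishes identically then each summand must vanish identically; in particular $\mathrm{Ric}=0$, and then $\mathrm{scal}=0$ automatically. Conversely, Ricci flatness makes both terms zero, so $\Delta_f\,\mathrm{scal}=0$ if and only if $M$ is Ricci flat. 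The one point to be careful about is that this vanishing is pointwise-to-global, but because each summand is nonpositive at every point, vanishing of the sum everywhere forces each summand to vanish everywhere, so no maximum principle or completeness hypothesis is needed.

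Finally, for the \textquotedblleft in particular\textquotedblright\ clause I would feed the conclusion $\mathrm{Ric}=0$ into Proposition \ref{Ein-Sol}. A Ricci flat gradient soliton is Einstein with Einstein constant $0$, so the soliton equation reduces to $\mathrm{Hess}f=\lambda g$ with $\lambda\neq 0$; that is, the Hessian is a nonzero constant multiple of the metric. Proposition \ref{Ein-Sol} then rules out the alternative $\mathrm{Hess}f=0$ and identifies the metric as a Gaussian. I do not anticipate any genuine obstacle here, since all of the nontrivial identities have been proved earlier; the only care required is the elementary sign bookkeeping in the expanding case and the invocation of Proposition \ref{Ein-Sol} to upgrade \textquotedblleft Ricci flat\textquotedblright\ to \textquotedblleft Gaussian.\textquotedblright
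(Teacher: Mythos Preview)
Your proposal is correct and follows exactly the paper's approach: both arguments read the inequality and the equality case directly off the identity $\tfrac{1}{2}\Delta_f\,\mathrm{scal}=-\left\vert\mathrm{Ric}\right\vert^2+\lambda\,\mathrm{scal}$, and both invoke Proposition \ref{Ein-Sol} to pass from Ricci flat to Gaussian in the expanding case. Your write-up simply spells out the sign analysis that the paper leaves implicit.
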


\begin{proof}
This follows easily from the equation 
\begin{equation*}
\frac{1}{2}\Delta _{f}\mathrm{scal}=-|\mathrm{Ric}|^{2}+\lambda \mathrm{scal.%
}
\end{equation*}%
That a Ricci flat expanding soliton must be a Gaussian is just Proposition %
\ref{Ein-Sol}.
\end{proof}

Now from the maximum principle we have that the scalar curvature cannot have
a minimum.

\begin{theorem}
\label{gradmin}

\begin{enumerate}
\item A steady gradient soliton whose scalar curvature achieves its minimum
is Ricci flat.

\item An expanding gradient soliton with nonnegative scalar curvature
achieving its minimum is a Gaussian.
\end{enumerate}
\end{theorem}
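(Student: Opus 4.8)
The plan is to feed the inequality from the preceding lemma into the strong maximum principle stated above. In both cases the hypotheses of that lemma are met, so we have $\Delta_f(\mathrm{scal}) \leq 0$, equivalently $\Delta_f(-\mathrm{scal}) \geq 0$. Thus the function $u = -\mathrm{scal}$ satisfies the hypothesis of the maximum principle. A point $p$ at which $\mathrm{scal}$ attains its minimum is a local maximum of $u$, so the maximum principle applies at $p$ and shows that $-\mathrm{scal}$, hence $\mathrm{scal}$, is constant in a neighborhood of $p$.

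To upgrade this local statement to a global one I would run the usual connectedness argument. Let $m = \min \mathrm{scal}$ and set $A = \{ x \in M : \mathrm{scal}(x) = m \}$. By hypothesis $A$ is nonempty, and it is closed because $\mathrm{scal}$ is continuous. It is also open: any $p \in A$ is a global minimum of $\mathrm{scal}$, hence a local maximum of $u$, so by the previous step $\mathrm{scal} \equiv m$ on a neighborhood of $p$, which places that neighborhood in $A$. Since $M$ is connected we conclude $A = M$, so the scalar curvature is constant.

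Once $\mathrm{scal}$ is constant we have $\Delta_f(\mathrm{scal}) = 0$, and the equality clause of the preceding lemma delivers both conclusions at once. In the steady case it says $M$ is Ricci flat, which is part (1). In the expanding case with nonnegative scalar curvature the same clause says $M$ is a Gaussian, which is part (2); the identification of a Ricci flat expanding soliton with a Gaussian is the content of Proposition \ref{Ein-Sol}, already folded into the lemma (a Ricci flat manifold is Einstein with constant $0 \neq \lambda$, so $\mathrm{Hess}\,f$ cannot vanish and we are forced into the Gaussian alternative).

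There is no substantial obstacle here: the analytic work lives entirely in the formula for $\Delta_f \mathrm{scal}$ and in the preceding lemma, both already established. The only point requiring a little care is the passage from ``locally constant'' to ``globally constant,'' which is the standard open-closed argument underlying the strong maximum principle and relies only on the minimum being attained at an interior local extremum of $-\mathrm{scal}$.
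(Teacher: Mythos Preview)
Your proof is correct and follows exactly the route the paper intends: the preceding lemma gives $\Delta_f(\mathrm{scal})\le 0$, the maximum principle applied to $-\mathrm{scal}$ forces $\mathrm{scal}$ to be constant near (hence, by your open--closed argument, everywhere equal to) its minimum, and the equality clause of the lemma then yields Ricci flatness in the steady case and the Gaussian in the expanding case. The paper does not spell this out beyond the sentence ``Now from the maximum principle we have that the scalar curvature cannot have a minimum,'' so you have simply written out the details of the same argument.
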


For gradient solitons there is a naturally associated measure $dm=e^{-f}d%
\mathrm{vol}_{g}$ which makes the operator $\Delta _{f}$ self-adjoint.
Namely the following identity holds for compactly supported functions. 
\begin{equation*}
\int_{M}\Delta _{f}(\phi )\psi dm=-\int_{M}\langle \nabla \phi ,\nabla \psi
\rangle dm=\int_{M}\phi \Delta _{f}(\psi )dm.
\end{equation*}

The measure $dm$ also plays an important role in Perel'man's entropy
formulas for the Ricci flow \cite{PerelmanI}. In \cite{Yau1976}, Yau proves
that on a complete Riemannian manifold any $L^{\alpha }$, positive,
subharmonic function is constant. The argument depends solely on using
integration by parts and picking a clever test function $\phi $. Therefore,
the argument completely generalizes to the measure $dm$ and operator $\Delta
_{f}$. Specifically the following $L^{\alpha }$ Liouville theorem holds.

\begin{theorem}[Yau]
\label{IntMaxPrin} Any nonnegative real valued function $u$ with $%
\Delta_f(u)(x) \geq 0$ which satisfies the condition 
\begin{equation}  \label{IntCondition}
\lim_{r \rightarrow \infty} \left( \frac{1}{r^2}\int_{B(p,r)} u^{\alpha} dm
\right) = 0
\end{equation}
for some $\alpha > 1$ is constant.
\end{theorem}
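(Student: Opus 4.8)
The plan is to adapt Yau's classical $L^\alpha$ Liouville theorem, replacing the Laplacian $\Delta$ and volume measure $d\mathrm{vol}_g$ by the drift operator $\Delta_f$ and the weighted measure $dm=e^{-f}d\mathrm{vol}_g$ throughout. The crucial structural fact that makes this work is the self-adjointness identity stated just above the theorem: for compactly supported functions,
\begin{equation*}
\int_M \Delta_f(\phi)\,\psi\, dm = -\int_M \langle\nabla\phi,\nabla\psi\rangle\, dm.
\end{equation*}
Yau's original argument relies only on integration by parts and a clever choice of cutoff function, and the integration-by-parts formula above is exactly the weighted analogue he needs. So first I would fix the logarithmic cutoff function $\phi$ that Yau uses, i.e.\ a function that is $1$ on $B(p,r)$, $0$ outside $B(p,2r)$, with $|\nabla\phi|\le C/r$, and verify that none of his manipulations use any property of the background measure beyond the divergence theorem in the form above.

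\emph{The key computation} is to test the subharmonicity $\Delta_f(u)\ge 0$ against $\phi^2 u^{\alpha-1}$ (after first handling positivity by working with $u+\varepsilon$ and letting $\varepsilon\to 0$, so that $u^{\alpha-1}$ is well defined). Integrating by parts with respect to $dm$, the inequality
\begin{equation*}
0\le \int_M \phi^2 u^{\alpha-1}\,\Delta_f(u)\,dm
\end{equation*}
expands into a term controlling $\int \phi^2 u^{\alpha-2}|\nabla u|^2\,dm$ against a cross term $\int \phi\, u^{\alpha-1}\langle\nabla\phi,\nabla u\rangle\,dm$. Applying the Cauchy--Schwarz (or Young) inequality to absorb the cross term into the gradient term yields a bound of the shape
\begin{equation*}
\int_{B(p,r)} u^{\alpha-2}|\nabla u|^2\,dm \le \frac{C}{r^2}\int_{B(p,2r)} u^{\alpha}\,dm,
\end{equation*}
where the factor $1/r^2$ arises precisely from the bound on $|\nabla\phi|^2$. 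This is where the hypothesis \eqref{IntCondition} enters: letting $r\to\infty$, the right-hand side tends to $0$, forcing $u^{\alpha-2}|\nabla u|^2\equiv 0$, hence $\nabla u\equiv 0$ on the set where $u>0$, and therefore $u$ is constant.

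\emph{The main obstacle} I anticipate is not conceptual but technical bookkeeping: one must confirm that every instance where Yau manipulates $\Delta$ can be replaced verbatim by $\Delta_f$ using only the weighted integration-by-parts identity, with no hidden appeal to properties of the unweighted measure (such as its behavior under the gradient flow, or curvature estimates on geodesic balls). In particular I would check that the argument uses the same geodesic balls $B(p,r)$ and the same volume-measure-free cutoff estimate $|\nabla\phi|\le C/r$, so that the $dm$-weight is only ever integrated, never differentiated against. The secondary care point is the regularization: since $u$ is only assumed nonnegative rather than strictly positive, I would run the estimate for $u+\varepsilon$, obtain the gradient bound uniformly in $\varepsilon$, and pass to the limit, which is routine since $\nabla(u+\varepsilon)=\nabla u$. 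Once these points are verified, the conclusion is immediate, and indeed the paper's remark that ``the argument completely generalizes to the measure $dm$'' signals that this verification, rather than any new idea, is all that is required.
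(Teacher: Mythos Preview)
Your proposal is correct and aligns precisely with what the paper indicates: the paper does not give an independent proof but simply remarks that Yau's original argument, depending only on integration by parts and a cutoff test function, carries over verbatim once the Laplacian and volume measure are replaced by $\Delta_f$ and $dm$. Your sketch of testing $\Delta_f u\ge 0$ against $\phi^2 u^{\alpha-1}$, absorbing the cross term via Cauchy--Schwarz, and invoking the hypothesis to kill the right-hand side is exactly Yau's proof in the weighted setting, so there is nothing to add.
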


Define $\Omega _{u,C}=\{x:u(x)\geq C\}$. If we only have a bound on the $f$%
-Laplacian on $\Omega _{u,C}$ then we can apply the $L^{\alpha }$ Liouville
theorem to prove the following corollary.

\begin{corollary}
\label{cor1} If $\Delta_f(u)(x) \geq 0$ for all $x \in \Omega_{u,C}$ and $u$
satisfies (\ref{IntCondition}) then $u$ is either constant or $u \leq C$.
\end{corollary}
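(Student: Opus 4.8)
The plan is to apply Yau's Liouville theorem (Theorem \ref{IntMaxPrin}) not to $u$ directly, which is only $f$-subharmonic on the superlevel set, but to its truncation above $C$. First I would set $w = \max(u-C,0)$, which is nonnegative by construction. The goal is to verify that $w$ satisfies the three hypotheses of Theorem \ref{IntMaxPrin} -- nonnegativity, the integral growth condition, and $\Delta_f w \geq 0$ -- conclude that $w$ is constant, and then read off the stated dichotomy.

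Two of these are immediate. Nonnegativity holds by definition, and the integral condition transfers cheaply: if $C < 0$ then $\Omega_{u,C} = M$ and one simply applies Theorem \ref{IntMaxPrin} to $u$ itself, so we may assume $C \geq 0$. In that case $0 \leq w \leq u$ pointwise (where $w>0$ we have $w = u-C \leq u$, and where $u<C$ we have $w=0\leq u$), hence $w^\alpha \leq u^\alpha$, and so condition (\ref{IntCondition}) for $w$ follows from the same condition for $u$.

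The crux is showing $\Delta_f w \geq 0$. On the open set $\{u > C\}\subseteq \Omega_{u,C}$ we have $w = u-C$, so $\Delta_f w = \Delta_f u \geq 0$ by hypothesis, while on $\{u < C\}$ we have $w\equiv 0$. The delicate point is the level set $\{u = C\}$, across which $w$ need not be smooth. To handle this I would replace $t\mapsto(t-C)_+$ by a smooth convex nondecreasing approximation $\phi_\epsilon$ with $\phi_\epsilon \equiv 0$ on $(-\infty,C]$ and $\phi_\epsilon \to (\,\cdot\,-C)_+$; then
\[
\Delta_f\bigl(\phi_\epsilon(u)\bigr) = \phi_\epsilon'(u)\,\Delta_f u + \phi_\epsilon''(u)\,|\nabla u|^2 \geq 0,
\]
since $\phi_\epsilon'' \geq 0$ by convexity and $\phi_\epsilon'(u)\,\Delta_f u \geq 0$ because $\phi_\epsilon'$ is supported in $\{u>C\}\subseteq\Omega_{u,C}$, where $\Delta_f u \geq 0$. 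Each $\phi_\epsilon(u)$ is smooth, nonnegative, and dominated by $u$, so Theorem \ref{IntMaxPrin} forces it to be constant; letting $\epsilon \to 0$ shows that $w$ is constant. (Alternatively, one may record $\Delta_f w \geq 0$ weakly via Kato's inequality and invoke the integration-by-parts form of Yau's argument directly.) This smoothing across $\{u=C\}$ is the one step requiring genuine care.

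Once $w$ is known to equal a constant $k \geq 0$, the conclusion is a short dichotomy. If $k=0$, then $\max(u-C,0)\equiv 0$, which says exactly $u \leq C$ everywhere. If $k > 0$, then $(u(x)-C)_+ = k > 0$ at every point, which forces $u(x) = C+k$ at every point, so $u$ is constant. This is precisely the stated alternative.
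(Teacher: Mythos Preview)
Your approach is correct and is exactly the one the paper uses: apply Theorem \ref{IntMaxPrin} to $(u-C)_+$, conclude it is constant, and read off the dichotomy. The paper's proof is a two-line sketch that asserts this without justification; your additional care in transferring the integral bound and in handling $\Delta_f$ across the level set $\{u=C\}$ via a smooth convex approximation fills in details the paper leaves implicit, but does not change the strategy.
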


\begin{proof}
Apply Theorem \ref{IntMaxPrin} to the function $(u-C)_{+}=\max \{u-C,0\}$.
Then $(u-C)_{+}$ is constant which implies either $u\leq C$ or $u$ is
constant.
\end{proof}

One can also derive upper bounds on the growth of the measure $dm$ from the
inequality $\mathrm{Ric}+\mathrm{Hess}f\geq \lambda g$ see \cite{Morgan2005,
Wei-Wylie}. In particular, when $\lambda >0$, the measure is bounded above
by a Gaussian measure. Combining this estimate with the $L^{\alpha }$
maximum principle gives the following strong Liouville theorem for shrinking
gradient Ricci solitons.

\begin{corollary}
\cite{Wei-Wylie} \label{Louiville} If $M$ is a complete manifold satisfying 
\begin{equation*}
\mathrm{Ric} + \mathrm{Hess} f \geq \lambda g
\end{equation*}
for $\lambda > 0$ and $u$ is a real valued function such that $\Delta_f(u)
\geq 0$ and $u(x) \leq Ke^{\beta d(p,x)^2}$ for some $\beta < \lambda$ then $%
u$ is constant.
\end{corollary}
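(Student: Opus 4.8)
The plan is to combine the weighted $L^{\alpha}$ Liouville theorem, in the truncated form of Corollary \ref{cor1}, with the Gaussian upper bound on the measure $dm=e^{-f}d\mathrm{vol}_{g}$ that the hypothesis $\mathrm{Ric}+\mathrm{Hess}f\geq\lambda g$ with $\lambda>0$ supplies through the comparison estimates of \cite{Wei-Wylie, Morgan2005}. The strategy is to verify the integral growth condition (\ref{IntCondition}) for (a truncation of) $u$: the decay of $dm$ should beat the prescribed growth $u\leq Ke^{\beta d(p,x)^{2}}$ precisely because $\beta$ is below the threshold set by $\lambda$, after which Theorem \ref{IntMaxPrin} forces constancy.

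First I would reduce to nonnegative functions, since $u$ is only assumed bounded above while Theorem \ref{IntMaxPrin} wants $u\geq 0$. Fix an arbitrary constant $C\in\mathbb{R}$ and consider $w=(u-C)_{+}=\max\{u-C,0\}$. The hypothesis $\Delta_{f}u\geq 0$ holds everywhere, so in particular on $\Omega_{u,C}=\{u\geq C\}$, which is exactly the setting of Corollary \ref{cor1}; what remains is to check that $w$ satisfies (\ref{IntCondition}), since that is what the proof of Corollary \ref{cor1} feeds into Theorem \ref{IntMaxPrin}. On $\{u\geq C\}$ one has $w=u-C\leq Ke^{\beta d^{2}}+|C|$, so for any $\alpha>1$ there is a constant $A$ with $w^{\alpha}\leq A\,e^{\alpha\beta d^{2}}+A|C|^{\alpha}$ pointwise, and $w^{\alpha}$ vanishes off $\Omega_{u,C}$.

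The heart of the argument is then the verification of (\ref{IntCondition}) for $w$. The cited Gaussian bound on $dm$ gives two things: the total mass $m(M)$ is finite, and $\int_{M}e^{c\,d(p,\cdot)^{2}}dm<\infty$ for every $c$ below the Gaussian rate determined by $\lambda$. Because $\beta$ lies below $\lambda$, I would fix $\alpha>1$ close enough to $1$ that $\alpha\beta$ still lies below that rate; then $\int_{B(p,r)}w^{\alpha}dm\leq A\int_{M}e^{\alpha\beta d^{2}}dm+A|C|^{\alpha}m(M)$ is bounded uniformly in $r$, so $\tfrac{1}{r^{2}}\int_{B(p,r)}w^{\alpha}dm\to 0$ and (\ref{IntCondition}) holds. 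Corollary \ref{cor1} then gives, for this $C$, that either $u$ is constant or $u\leq C$. Since $C$ was arbitrary, if $u$ were nonconstant we would get $u\leq C$ for every $C\in\mathbb{R}$, which is absurd; hence $u$ is constant.

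I expect the genuine difficulty to lie entirely in the quantitative measure comparison and the matching of constants, not in the soft part above. One must know that $\mathrm{Ric}+\mathrm{Hess}f\geq\lambda g$ with $\lambda>0$ really produces a Gaussian upper bound for $dm$ with an explicit rate tied to $\lambda$ (the content of \cite{Wei-Wylie, Morgan2005}, resting on the weighted mean-curvature and volume comparison for the Bakry--\'Emery tensor), and one must track that rate carefully enough to confirm that the hypothesis on $\beta$ leaves room to choose $\alpha>1$ with $e^{\alpha\beta d^{2}}$ still $dm$-integrable. This constant bookkeeping is the only nontrivial ingredient; the truncation, the $f$-subharmonicity of $(u-C)_{+}$, the finiteness of $m(M)$, and the final elimination argument are all routine.
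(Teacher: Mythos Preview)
Your proposal is correct and follows exactly the approach the paper indicates: the paper does not give a detailed proof of this corollary but simply notes that the Gaussian upper bound on $dm$ coming from $\mathrm{Ric}+\mathrm{Hess}f\geq\lambda g$ (via \cite{Morgan2005, Wei-Wylie}) feeds into the $L^{\alpha}$ Liouville theorem (Theorem~\ref{IntMaxPrin}), and your write-up fleshes this out faithfully. Your truncation step $(u-C)_{+}$ and the ``for all $C$'' elimination argument are precisely how one reduces to the nonnegative case required by Theorem~\ref{IntMaxPrin}, mirroring the mechanism in Corollary~\ref{cor1}; your identification of the constant-matching between $\beta$ and the Gaussian rate of $dm$ as the only substantive point is also accurate.
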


A similar result, under the additional assumption that $\mathrm{Ric}$ is
bounded above, is proven by Naber \cite{Naber}. In fact, one can see
immediately from the equation 
\begin{equation*}
\Delta _{f}(\mathrm{scal})=\sum \lambda _{i}(\lambda -\lambda _{i})
\end{equation*}%
that if $0\leq \mathrm{Ric}\leq \lambda $ for a shrinking soliton then $%
\mathrm{scal}$ is bounded, nonnegative, and has $\Delta _{f}(\mathrm{scal}%
)\geq 0$. Therefore it is constant and we have Lemma \ref{NaberLemma}. Using
the Liouville theorem the following improvement of Proposition \ref{Prop} is
also true for shrinking gradient solitons.

\begin{theorem}
If $\mathrm{scal}$ is bounded, then 
\begin{equation*}
0\leq \inf_{M}\mathrm{scal}\leq n\lambda .
\end{equation*}%
Moreover, if $\mathrm{scal}\geq n\lambda ,$ then $M$ is Einstein.
\end{theorem}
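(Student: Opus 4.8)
The plan is to run everything off the scalar-curvature identity recorded after Lemma~\ref{Sol2}, namely
\begin{equation*}
\tfrac{1}{2}\Delta _{f}\mathrm{scal}=-\left\vert \mathrm{Ric}\right\vert ^{2}+\lambda \,\mathrm{scal},
\end{equation*}
together with the Cauchy--Schwarz bound $\left\vert \mathrm{Ric}\right\vert ^{2}\geq \tfrac{1}{n}\mathrm{scal}^{2}$, which yields the differential inequality
\begin{equation*}
\tfrac{1}{2}\Delta _{f}\mathrm{scal}\leq \tfrac{1}{n}\mathrm{scal}\left( n\lambda -\mathrm{scal}\right) .
\end{equation*}
I would also record at the outset that, since $\lambda >0$, the measure $dm$ is bounded above by a Gaussian and so has finite total mass $m(M)<\infty$ (see \cite{Morgan2005, Wei-Wylie}); hence for any bounded $u$ the growth condition (\ref{IntCondition}) is automatic, because $\tfrac{1}{r^{2}}\int_{B(p,r)}u^{\alpha }\,dm\leq \tfrac{1}{r^{2}}(\sup |u|)^{\alpha }m(M)\to 0$. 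With these inputs the three assertions split into one localized subharmonicity argument for the lower bound and two global Liouville arguments for the upper bound and the Einstein statement.

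For the lower bound $\inf_{M}\mathrm{scal}\geq 0$ I would set $u=-\mathrm{scal}$ and examine $\Omega _{u,0}=\{x:\mathrm{scal}(x)\leq 0\}$. On this set
\begin{equation*}
\Delta _{f}u=2\left\vert \mathrm{Ric}\right\vert ^{2}-2\lambda \,\mathrm{scal}\geq 0,
\end{equation*}
since $\lambda >0$ and $\mathrm{scal}\leq 0$ there. Because $\mathrm{scal}$ is bounded and $m(M)<\infty$, the bounded nonnegative function $(u)_{+}$ entering Corollary~\ref{cor1} obeys (\ref{IntCondition}), so Corollary~\ref{cor1} with $C=0$ forces $u$ to be either $\leq 0$ or constant. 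In the first case $\mathrm{scal}\geq 0$ directly; in the second $\mathrm{scal}$ is constant and Proposition~\ref{Prop} again gives $\mathrm{scal}\geq 0$.

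For the upper bound I would argue by contradiction: if $\inf_{M}\mathrm{scal}>n\lambda$ then $\mathrm{scal}>n\lambda >0$ everywhere, so the displayed inequality gives $\Delta _{f}\mathrm{scal}\leq 0$ on all of $M$. Thus $w=-\mathrm{scal}$ is globally $f$-subharmonic and bounded above, and Corollary~\ref{Louiville} applies (since $\mathrm{Ric}+\mathrm{Hess}f=\lambda g\geq \lambda g$ with $\lambda>0$, and a bounded function trivially satisfies $w\leq Ke^{\beta d^{2}}$ with $\beta=0<\lambda$), forcing $w$, and hence $\mathrm{scal}$, to be constant---which contradicts the bound $\mathrm{scal}\leq n\lambda$ of Proposition~\ref{Prop}. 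This gives $\inf_{M}\mathrm{scal}\leq n\lambda$. For the ``moreover'' statement, if $\mathrm{scal}\geq n\lambda$ on all of $M$ the same computation gives $\Delta _{f}(-\mathrm{scal})\geq 0$ globally, so Corollary~\ref{Louiville} again makes $\mathrm{scal}$ constant; combined with $\mathrm{scal}\leq n\lambda$ this constant equals $n\lambda$, and the equality case of Proposition~\ref{Prop} shows $M$ is Einstein.

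The two Liouville steps are essentially automatic once boundedness is in hand, so I expect the only delicate point to be the lower bound. There one must know that the localized maximum principle of Corollary~\ref{cor1} genuinely applies: that $(-\mathrm{scal})_{+}$ is an honest $\Delta _{f}$-subsolution across $\partial \Omega _{u,0}$ (the standard fact that the positive part of a subsolution is a subsolution, already packaged into Corollary~\ref{cor1}) and that (\ref{IntCondition}) holds, which reduces exactly to the finiteness of $m(M)$. Thus the real content is confirming that the Gaussian measure bound for $\lambda >0$ and the boundedness hypothesis feed the two maximum principles correctly; the curvature computations themselves are immediate from Lemma~\ref{Sol2}.
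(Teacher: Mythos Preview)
Your proof is correct and follows essentially the same route as the paper's: the paper also uses the Cauchy--Schwarz inequality to get $\Delta_f\mathrm{scal}\le 0$ on $\{\mathrm{scal}\ge n\lambda\}$ and then applies Corollary~\ref{Louiville} to $K-\mathrm{scal}$ (your $w=-\mathrm{scal}$ differs from this by a constant), and it handles the lower bound exactly as you do via Corollary~\ref{cor1} applied to $-\mathrm{scal}$ on $\{\mathrm{scal}\le 0\}$. You supply more detail than the paper does---spelling out that $m(M)<\infty$ feeds (\ref{IntCondition}), and separately deducing $\inf_M\mathrm{scal}\le n\lambda$ from the ``moreover'' clause---but the underlying argument is the same.
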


\begin{proof}
First suppose that $\mathrm{scal}\geq n\lambda $. By the Cauchy-Schwarz
inequality 
\begin{eqnarray*}
\Delta _{f}(\mathrm{scal}) &=&-|\mathrm{Ric}|^{2}+\lambda \mathrm{scal} \\
&\leq &-\frac{\mathrm{scal}^{2}}{n}+\lambda \mathrm{scal} \\
&\leq &\mathrm{scal}\left( \lambda -\frac{\mathrm{scal}}{n}\right) 
\end{eqnarray*}%
So that $\Delta _{f}(\mathrm{scal})\leq 0.$ Let $K$ be the upper bound on $%
\mathrm{scal}$ then the function $u=K-\mathrm{scal}$ is bounded,
nonnegative, and has $\Delta _{f}(u)\geq 0$. So by Corollary \ref{Louiville} 
$\mathrm{scal}$ is constant and thus must be Einstein.

To see the other inequality consider that on $\Omega _{0}=\{x:\mathrm{scal}%
(x)\leq 0\}$, $\Delta _{f}(\mathrm{scal})\leq 0$, so applying Corollary \ref%
{cor1} to $-\mathrm{scal}$ gives the result.
\end{proof}

For steady and expanding gradient solitons we can also apply the $L^{\alpha
} $ Liouville theorem to the equation, $\Delta _{f}(|\nabla f|^{2})\geq 0$.

\begin{theorem}
\label{Gap} Let $\alpha > 2$. If $M$ is a steady or expanding soliton with 
\begin{equation}  \label{GapEqn}
\limsup_{r \rightarrow \infty} \frac{1}{r^2} \int_{B(p,r)} |\nabla
f|^{\alpha}e^{-f}dvol_g = 0.
\end{equation}
then $M$ is Einstein.
\end{theorem}

We think of Theorem \ref{Gap} as a gap theorem for the quantity $%
\int_{B(p,r)} |\nabla f|^{\alpha}e^{-f}dvol_g$ since, if $M$ is Einstein,
the quantity is zero.

For steady solitons $\mathrm{scal}+|\nabla f|^{2}$ is constant so if the
scalar curvature is bounded then so is $|\nabla f|$ and (\ref{GapEqn}) is
equivalent to the measure $dm$ growing sub-quadratically. Therefore, we have
the following corollary.

\begin{corollary}
\label{GrowthGap} If $M$ is a steady Ricci soliton with bounded scalar
curvature and 
\begin{equation*}
\lim_{r \rightarrow \infty} \frac{1}{r^2} \int_{B(p,r)}e^{-f} dvol_g = 0
\end{equation*}
Then $M$ is Ricci flat.
\end{corollary}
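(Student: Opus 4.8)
The plan is to reduce the stated weighted-volume hypothesis to the integral condition (\ref{GapEqn}) of Theorem \ref{Gap}, invoke that theorem to conclude $M$ is Einstein, and then upgrade Einstein to Ricci flat using the steady soliton structure.

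First I would record that for a steady gradient soliton the identity $\mathrm{scal}+|\nabla f|^{2}=\mathrm{const}$ holds. This is the $\lambda=0$ case of the relation $\mathrm{scal}+|\nabla f|^{2}-2\lambda f=\mathrm{const}$ established earlier, and it is precisely the fact recalled in the paragraph preceding this corollary. Since the scalar curvature is assumed bounded, this identity immediately yields a uniform bound $|\nabla f|\leq C$ on all of $M$.

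Next I would fix any $\alpha>2$ and estimate, for each $r$,
\[
\frac{1}{r^{2}}\int_{B(p,r)}|\nabla f|^{\alpha}e^{-f}\,d\mathrm{vol}_{g}\leq \frac{C^{\alpha}}{r^{2}}\int_{B(p,r)}e^{-f}\,d\mathrm{vol}_{g}.
\]
By the hypothesis the right-hand side tends to $0$ as $r\to\infty$, so in particular the $\limsup$ condition (\ref{GapEqn}) holds for this $\alpha$. Theorem \ref{Gap} then applies and shows that $M$ is Einstein. Finally, an Einstein metric has constant scalar curvature, so the earlier proposition asserting that a steady gradient soliton with constant scalar curvature is Ricci flat delivers the conclusion.

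The argument is essentially a reduction, and the single step that genuinely carries weight is the boundedness of $|\nabla f|$: this is what permits pulling the factor $|\nabla f|^{\alpha}$ out of the integral and thereby converting sub-quadratic growth of the weighted volume $\int_{B(p,r)}e^{-f}\,d\mathrm{vol}_{g}$ into the hypothesis of Theorem \ref{Gap}. Without the steady identity tying $|\nabla f|^{2}$ to the bounded scalar curvature there would be no such bound, so I expect that to be the crux; everything else is a direct substitution into results already in hand.
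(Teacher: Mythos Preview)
Your proof is correct and follows essentially the same route as the paper: the paragraph preceding the corollary is the paper's entire argument, and it uses the identity $\mathrm{scal}+|\nabla f|^{2}=\mathrm{const}$ together with the boundedness of $\mathrm{scal}$ to bound $|\nabla f|$, reducing the weighted-volume hypothesis to condition (\ref{GapEqn}) and then invoking Theorem \ref{Gap}. The only difference is cosmetic: you spell out the final step Einstein $\Rightarrow$ constant scalar curvature $\Rightarrow$ Ricci flat via the earlier proposition, whereas the paper leaves this implicit (for a steady soliton the ``Einstein'' conclusion of Theorem \ref{Gap} already amounts to $\nabla X=0$, hence $\mathrm{Ric}=0$).
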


We note the relation of this result to the theorem proved by the second
author and Wei that if $\mathrm{Ric} + \mathrm{Hess} f \geq 0$ and $f$ is
bounded then the growth of $e^{-f} dvol_g$ is at least linear \cite%
{Wei-Wylie}. Since Ricci flat manifolds have at least linear volume growth
Corollary \ref{GrowthGap} implies that steady Ricci solitons with bounded
scalar curvature also have at least linear $dm$-volume growth. There are
Ricci flat manifolds with linear volume growth so Corollary \ref{GrowthGap}
can be viewed as a gap theorem for the growth of $dm$ on gradient steady
solitons.


\begin{thebibliography}{99}
\bibitem{Besse} Arthur Besse. \newblock Einstein Manifolds. \newblock %
Ergebnisse der Mathematik und ihrer Grenzgebeite (3), 10. Springer-Verlag,
Berlin-New York, 1987.

\bibitem{Bourguignon1981} Jean Pierre Bourguignon. \newblock Ricci Curvature
and Einstein metrics. \newblock In \emph{Global differential geometry and
global analysis (Berlin, 1979)}, volume 838 of \emph{Lecture notes in Math.}
pages 42-63. Springer, Berlin 1981.

\bibitem{Cao1996} Huai-Dong Cao. \newblock Existence of gradient {K}\"ahler-{%
R}icci solitons. \newblock In \emph{Elliptic and parabolic methods in
geometry (Minneapolis, MN, 1994)}, pages 1--16. A K Peters, Wellesley, MA,
1996.

\bibitem{Cao1997} Huai-Dong Cao. \newblock Limits of solutions to the
K\"ahler-Ricci flow. \newblock {\em J. Differential Geom.}, 45(2):257-272
1997.

\bibitem{Cao2006} Huai-Dong Cao. \newblock Geometry of Ricci Solitons. %
\newblock{\em Chin. Ann. Math.}, 27B(2):121-142,2006.

\bibitem{Chow-Knopf} Bennet Chow and Dan Knopf. \newblock The Ricci flow: An
introduction. \newblock {\em Mathematical Surveys and Monographs}, AMS,
Providence, RI, 2004.

\bibitem{Chow-Lu-Ni} Bennet Chow, Peng Lu, and Lei Ni. \newblock Hamilton's
Ricci flow. \newblock {\em Graduate studies in Mathematics}, AMS,
Providence, RI, 2006.

\bibitem{Derdzinski} A. Derdzinski. \newblock Compact Ricci solitons,
preprint.

\bibitem{Eminenti-LaNave-Mantegazza} Manolo Eminenti, Gabriele La Nave, and
Carlo Mantegazza.\newblock Ricci Solitons - the Equation Point of View. %
\newblock arXiv:math.DG/0607546v2

\bibitem{Feldman-Ilmanen-Knopf2003} Mikhail Feldman, Tom Ilmanen, and Dan
Knopf. \newblock Rotationally symmetric shrinking and expanding gradient {K}%
\"ahler-{R}icci solitons. \newblock {\em J. Differential Geom.},
65(2):169--209, 2003.

\bibitem{Hamilton1988} Richard Hamilton. \newblock The Ricci flow on
Surfaces. \newblock In \emph{Mathematics and General Relativity.} volume 71
of \emph{C}ontemporary Mathematics pages 237-262. AMS, Providence, RI, 1988.

\bibitem{Ivey1993} Thomas Ivey. \newblock Ricci solitons on compact
three-manifolds. \newblock{ \em Diff. Geom. Appl.} 3: 301-307, 1993.

\bibitem{Ivey1994} Thomas Ivey. \newblock New Examples of complete Ricci
solitons. \newblock{ \em Proc. Amer. Math Soc.} 122(1): 241-245, 1994.

\bibitem{Koiso1990} Norihito Koiso. \newblock On rotationally symmetric {H}%
amilton's equation for {K}\"ahler-{E}instein metrics. \newblock In \emph{%
Recent topics in differential and analytic geometry}, volume~18 of \emph{%
Adv. Stud. Pure Math.}, pages 327--337. Academic Press, Boston, MA, 1990.

\bibitem{Lauret2001} Jorge Lauret. \newblock Ricci Soliton homogeneous
nilmanifolds. \newblock{\em Math. Ann.}, 319:715-733, 2001.

\bibitem{Morgan2005} Frank Morgan. \newblock Manifolds with Density. %
\newblock {\em Notices of the Amer. Math. Soc.}, 52(8): 853--858, 2005.

\bibitem{Naber} A.~Naber. \newblock Some geometry and analysis on {R}icci
solitons. \newblock arXiv:math.{DG}/0612532.

\bibitem{PerelmanI} G.~Ya. Perelman. \newblock The entropy formula for the {R%
}icci flow and its geometric applications. \newblock arXiv: math.DG/0211159.

\bibitem{PerelmanII} G.~Ya. Perelman. \newblock Ricci flow with surgery on
three manifolds. \newblock arXiv: math.DG/0303109.

\bibitem{Petersen} Peter Petersen. \newblock Riemannian Geometry. \newblock %
Graduate Texts in Mathematics, 171, Springer-Verlag, New York, 1998.

\bibitem{Poor} Walter A. Poor. \newblock Differential Geometric Structures. %
\newblock McGraw Hill, New York, 1981.

\bibitem{Wei-Wylie} Guofang Wei and William Wylie. \newblock Comparison
Geometry for the Bakry-Emery Ricci tensor. \newblock  arXiv:math.{DG}%
/0706.1120.

\bibitem{Wang-Zhu2004} X.J. Wang and X.H. Zhu. \newblock K\"ahler-Ricci
solitons on toric manifolds with positive first Chern class. 
\newblock {\em
Adv. Math.}, 188(1):87-103, 2004.

\bibitem{Yau1976} Shing~Tung Yau. \newblock Some function-theoretic
properties of complete {R}iemannian manifold and their applications to
geometry. \newblock {\em Indiana Univ. Math. J.}, 25(7):659--670, 1976.
\end{thebibliography}
\end{document}